\newtheorem{theorem}[subsubsection]{Theorem}
\newtheorem{lemma}[subsubsection]{Lemma}
\newtheorem{corollary}[subsubsection]{Corollary}
\theoremstyle{definition}
\theoremstyle{remark}
\newtheorem{remark}[subsubsection]{Remark}
\DeclareMathAlphabet{\mathpzc}{OT1}{pzc}{m}{it}
 \newcommand{\RR}{\mathbb{R}}
 \newcommand{\FF}{{\mathbb F}}
 \newcommand{\ZZ}{{\mathbb Z}}
 \newcommand{\QQ}{{\mathbb Q}}
 \newcommand{\CC}{{\mathbb C}}
 \newcommand{\NN}{{\mathbb N}}
 \newcommand{\GG}{{\mathbb G}}
\newcommand{\TT}{{\mathbb T}}
\newcommand{\LL}{{\mathbb L}}
\renewcommand{\a}{\alpha}   
  \renewcommand{\th}{\theta}
  \renewcommand{\th}{\theta}
 \newcommand{\s}{\sigma} 
\newcommand{\G}{\Gamma}
\begin{document}
\title[Independence of gamma and zeta values]{Algebraic independence
of arithmetic gamma values and Carlitz zeta values }

\author[Chang]{Chieh-Yu Chang}
\address{National Center for Theoretical Sciences, Mathematics Division,
National Tsing Hua University, Hsinchu City 30042, Taiwan
  R.O.C.}
\address{Department of Mathematics, National Central
  University, Chung-Li 32054, Taiwan R.O.C.}
\email{cychang@math.cts.nthu.edu.tw}

\author[Papanikolas]{Matthew A. Papanikolas} \address{Department of Mathematics,
  Texas A{\&}M University, College Station, TX 77843-3368, USA}
\email{map@math.tamu.edu}

\author[Thakur]{Dinesh S. Thakur} \address{Department of Mathematics,
  University of Arizona, Tucson, AZ 85721, USA}
\email{thakur@math.arizona.edu}

\author[Yu]{Jing Yu} \address{Department of Mathematics, National Taiwan University,
Taipei City 106, Taiwan R.O.C.}  \email{yu@math.ntu.edu.tw }

\date{August 31, 2009}

\thanks{The first author was supported by an NCTS postdoctoral fellowship.
The second author was supported by NSF Grant DMS-0600826. The third
author was supported by NSA grant H98230-08-1-0049. The fourth
author was supported by NSC Grant No.\ 96-2628-M-007-006.}

\subjclass[2000]{11J93 (Primary); 11G09, 11M38, 33E50 (Secondary)}

\begin{abstract}
  We consider the values at proper fractions of the arithmetic gamma
  function and the values at positive integers of the zeta function
  for $\FF_q[\theta]$ and provide complete algebraic independence results for them.
\end{abstract}

\maketitle
\section{Introduction}
In this paper, we determine all of the algebraic relations among
special values of two important functions in function field
arithmetic, namely the arithmetic gamma function and the zeta
function associated to $\FF_q[\theta]$.  For more background on
function field arithmetic and on the properties of these functions,
we refer to [G96,  T04] and references there. We give only a
brief introduction that is relevant here.

Let us write $A=\FF_q[\theta]$, $k=\FF_q(\theta)$,
$k_{\infty}=\FF_q((1/\theta))$, the completion of $k$ at its usual
infinite place, and $\CC_{\infty}$ for the completion of an
algebraic closure of $k_{\infty}$. It is well-known that these are
good analogues of $\ZZ$, $\QQ$, $\RR$, and $\CC$ respectively.  Let
$A_+$ denote the set of monic polynomials in $A$. This is considered
to be an analogue of the set $\NN$ of positive integers.

By (Carlitz) zeta values, we mean
$$\zeta_{C}(s)=\sum_{a\in A_+}\frac{1}{a^s}\in k_{\infty}, \ \ s\in \NN.$$
These values, first considered by Carlitz, are the special values at
positive integers of the zeta function studied by Goss.

Let $$D_{n}:=\prod_{i=0}^{n-1}( \theta^{q^{n}}-\theta^{q^{i}}  ), \
\ \ \overline{D_{n}}:= D_{n}/(\theta^{\deg D_{n}} ).$$ The Carlitz
factorial of $n$ is defined to be $\prod D_i^{n_i}\in \FF_q[\th]$
for $n=\sum n_iq^i\in \NN$, $0\leq n_i<q$, and the interpolation of
its unit part for $n \in \ZZ_p$, due to Goss \cite{G80}, is
$$ n!:=\prod \overline{D_{i}} ^{n_{i}}\in k_{\infty} \hbox{ for }n=\sum n_{i} q^{i}  ,\hbox{ }0\leq n_{i}<q. $$
By (arithmetic or Carlitz-Goss) gamma values, we mean values at
proper fractions of this function.

Our goal in this paper is to determine completely the algebraic
relations among gamma and zeta values.  We mention that there are
many parallels (see \cite{G96} and \cite{T04}) between known facts
about classical zeta and gamma values and their function field
counterparts, such as Euler's theorem, prime factorization,
interpolations at all finite primes, functional equations,
Gross-Koblitz formulas, and Chowla-Selberg formulas.  In fact, for
$\FF_q[\theta]$-arithmetic, there are two types of gamma functions,
the arithmetic one above dealing with the cyclotomic theory of usual
roots of unity corresponding to constant field extensions and the
geometric one dealing with the cyclotomic theory of Carlitz-Drinfeld
cyclotomic extensions.  For a unified treatment of these two
together with the classical gamma function, see \cite[Sec.
4.12]{T04}.

Very briefly, the development of the special value theory is the following.

For the classical gamma function the only gamma values (at proper
fractions) known to be transcendental have denominators $2$, $4$ or
$6$, with $\Gamma(1/2)=\sqrt{\pi}$, and with the ones with
denominators $4$ and $6$ related to periods of elliptic curves with
complex multiplications by Gaussian or Eisenstein integers, via the
Chowla-Selberg formula. There is an algebraic independence result
concerning these values due to Chudnovsky. (The Beta value theory is
much better developed by results of Wolfart-W\"ustholz).

The third author (in his 1987 thesis, see \cite{T91}) proved
analogues of the first formula (with $2\pi i$ replaced by
$\tilde{\pi}$, the period of Carlitz module) and of the
Chowla-Selberg formula for the arithmetic gamma function, resulting
in a parallel transcendence and independence statement by results of
the fourth author and Thiery on transcendence of periods.  In
\cite{T96}, using automata methods, the transcendence of gamma
values with any denominator, but with some restrictions on
numerators, was established.  These restrictions were then removed
by Allouche \cite{Al96}. Mend\`{e}s France and Yao \cite{MFY97} gave
an easier automata proof and finally the third author completely
determined all transcendental monomials in gamma values.  (See
\cite[Sec.~11.3]{T04}; we also take this opportunity to correct
`turns out to be a trivial monomial' in the last but one paragraph
of p.~349, by adding `after translating by integers appropriately to
apply Theorem 4.6.4' as explained on p.~351.)  But the more general
algebraic dependence question remained.

As for the geometric gamma function, in \cite{T91}, some very
special values were related to the periods of Drinfeld modules, thus
establishing their transcendence by the results of Wade and the
fourth author. Anderson, Sinha, Brownawell and the second author
\cite{S97}, \cite{BP02}, \cite{ABP04}, \cite{P08} connected all
values at proper fractions of geometric gamma to periods and
quasi-periods of certain $t$-motives of Anderson \cite{A86}
(generalizations of Drinfeld modules) and at the same time provided
strong transcendence tools to completely determine \cite{ABP04} all
algebraic relations among them.  This paper does the same for the
arithmetic gamma function, by using these new tools and connecting
these values to the periods and quasi-periods of appropriate
$t$-motives (see Section \ref{subsec  gamma}), thus bypassing the
automata theory.

The main result of the present paper (Theorem~\ref{main result})
considers arithmetic gamma values and zeta values simultaneously and
furthermore determines all algebraic relations among them.  The
earlier results of the fourth author \cite{Y91} on transcendence of
zeta values, already surpassing the parallel classical results, have
recently been further improved \cite{CY07}, \cite{CPY08a} to
complete algebraic independence results for zeta values.  Here we
show that these techniques generalize to give algebraic independence
of both arithmetic gamma and zeta values together.

We briefly mention some additional avenues of research one can now
pursue in light of Theorem~\ref{main result}.  In \cite{CPY08a},
specific techniques inspired from \cite{Ch09} are introduced to deal
with varying $q$, i.e.\ to obtain algebraic independence results for
zeta values at positive integers with varying constant fields. This
method certainly can also work for gamma values, in particular to
determine all algebraic relations among special arithmetic gamma
values as the constant fields vary in the same characteristic. This
more complicated question will not be treated here however.  We note
that in another paper \cite{CPY09b}, the first, second, and fourth
author have also established algebraic independence of geometric
gamma and zeta values taken together.  We leave the question of
algebraic independence of arithmetic gamma and geometric gamma
values taken together to a later work.  Note that for the special
case  $q=2$, the geometric gamma values in question are algebraic
multiples of $\tilde{\pi}$ and the zeta value $\zeta_{C}(n)$ is a
rational multiple of $\tilde{\pi}^{n}$ for each $n\in \NN$, and so
the present paper covers the algebraic relations of all three
together completely in this case.

Overall the present results bring the special value theory of the
arithmetic gamma function and zeta function for $A=\FF_q[\theta]$ to
a very satisfactory state; however, similar questions for (i)
$v$-adic interpolations (see \cite[Sec.~11.3]{T04} for very partial
results about $v$-adic gamma values using automata methods and
\cite{Y91} for transcendence of $v$-adic zeta values), (ii)
generalizations to other rings `$A$' in the setting of Drinfeld
modules \cite[4.5, 8.3]{T04}, (iii) values of the two variable gamma
function of Goss \cite[4.12]{T04}, \cite[Sec.~8]{T91}, \cite{G88},
are still open.

A main tool here for proving algebraic independence is a theorem of
the second author \cite{P08}, which is a function field version of
Grothendieck's conjecture on periods of abelian varieties. The
$t$-motives related to special arithmetic gamma values  have
``complex multiplication'' by constant field extensions. This fact
enables us to show that the associated Galois group by Tannakian
duality is the  Weil restriction of scalars of $\GG_{m}$ from the
constant field extension in question, hence a torus. On the other
hand, according to Chang-Yu \cite{CY07}, Galois groups of the
$t$-motives related to Carlitz zeta values are always extensions of
$\GG_{m}$ by vector groups.  The Galois group of the direct sum of
these two types of $t$-motives can be shown to be an extension of a
torus by a vector group.  Adding the dimension of the torus with
that of the vector group in question proves the desired algebraic
independence result.  Thus, the story of arithmetic gamma values
unfolds through $t$-motives having arithmetic (cyclotomic) CM in
this paper, just as $t$-motives having geometric (cyclotomic) CM
provide the proper setting for special geometric gamma values
\cite{ABP04}, \cite{CPY09b}.

For the rest of the paper, we will only consider arithmetic gamma
values and Carlitz zeta values as defined above.

\textbf{Acknowledgments:} The authors thank the Arizona Winter
School 2008 on `Special Functions and Transcendence,' where this
collaboration began. The first author thanks NCTS for supporting him
to visit Texas A{\&}M University. He also thanks Texas A{\&}M
University for their hospitality. The third author thanks the von
Neumann Fund and the Ellentuck Fund for their support during his
stay at the Institute for Advanced Study at Princeton, where he
worked on this paper.

\section{$t$-motives and Periods }
\subsection{Notations.}
\subsubsection{Table of symbols.}${}$
\\$\mathbb{F}_{q}:=$ the finite field of $q$ elements, $q$ a power
of a prime number $p$.
\\$\th,t:=$ independent variables.
\\$A:=\mathbb{F}_{q}[\theta]$, the polynomial ring in the variable
$\theta$ over $\mathbb{F}_{q}$.
\\$A_{+}:=$ the set of monic polynomials of $A$.
\\$k:=\mathbb{F}_{q}(\theta)$, the fraction field of $A$.
\\$k_{\infty}:=\mathbb{F}_{q}((\frac{1}{\theta}))$, the completion of
$k$ with respect to the place at infinity.
\\$\overline{k_{\infty}}:=$ a fixed algebraic closure of
$k_{\infty}$.
\\$\overline{k}:=$ the algebraic closure of $k$ in
$\overline{k_{\infty}}$.
\\$\mathbb{C}_{\infty}:=$ the completion of $\overline{k_{\infty}}$
with respect to canonical extension of the place at infinity.
\\$|\cdot|_{\infty}:=$ a fixed absolute value for the completed field
$\mathbb{C}_{\infty}$.
\\$\mathbb{T}:=\{f\in \mathbb{C}_{\infty}[[t]]: f\ \textnormal{converges for\ }
|t|_{\infty}\leq_{}1 \}$, the Tate algebra.
\\$\mathbb{L}:=$ the fraction field of $\mathbb{T}$.
\\${\GG}_{a}:=$ the additive group.
\\$\hbox{GL}_r/F:=$ for a field $F$, the $F$-group scheme of invertible rank $r$  matrices.
\\${\GG}_{m}:=\hbox{GL}_{1}$, the multiplicative group.

\subsubsection{Twisting.}  For $n\in {\ZZ}$, given
$f=\sum_{i} a_{i}t^{i}\in \mathbb{C}_{\infty}((t))$,  define the
twist of $f$ by
${\s}^{n}(f):=f^{(-n)}=\sum_{i}a_{i}^{q^{-n}}t^{i}$. The twisting
operation is an automorphism of the Laurent series field
$\mathbb{C}_{\infty}((t))$ that stabilizes several subrings, e.g.,
$\overline{k}[[t]]$, $\overline{k}[t]$, and $\mathbb{T}$. More
generally, for any matrix $B$ with entries in
$\mathbb{C}_{\infty}((t))$ we define $B^{(-n)}$ by the rule
${B^{(-n)}}_{ij}={B_{ij}}^{(-n)}$.

\subsubsection{Entire power series.}

A power series $f=\sum_{i=0}^{\infty}a_{i}t^{i}\in
\mathbb{C}_{\infty}[[t]]$ that satisfies
\[
  \lim_{i\rightarrow \infty}\sqrt[i]{|a_{i}|_{\infty}}=0
\]
and
\[
  [k_{\infty}(a_{0},a_{1},a_{2},\dots):k_{\infty}]< \infty
\]
is called an entire power series. As a function of $t$, such a power
series $f$ converges on all $\mathbb{C}_{\infty}$ and, when
restricted to $\overline{k_{\infty}}$, $f$ takes values in
$\overline{k_{\infty}}$. The ring of the entire power series is
denoted by $\mathbb{E}$.

\subsection{Tannakian categories and difference Galois groups}
We follow \cite{P08} in working with the Tannakian category of
$t$-motives and the Galois theory of Frobenius difference equations,
which is analogous to classical differential Galois theory. In this
subsection, we fix a positive integer $\ell$ and let
$\bar{{\s}}:={\s}^{\ell}$.

Let $\bar{k}(t)[\bar{{\s}},\bar{{\s}}^{-1}]$ be the non-commutative
ring of Laurent polynomials in $\bar{{\s}}$ with coefficients in
$\bar{k}(t)$, subject to the relation
\[
  \bar{{\s}}f:=f^{(-\ell)} \bar{{\s}} \hbox{ for all }f\in \bar{k}(t).
\]A pre-$t$-motive $M$ is a left
$\bar{k}(t)[\bar{\s},\bar{\s}^{-1}]$-module which is
finite-dimensional over $\bar{k}(t)$. Let
$\mathbf{m}\in\hbox{Mat}_{r\times 1}(M)$ be a $\bar{k}(t)$-basis of
$M$. Multiplication by $\bar{\s}$ on $M$ is represented by
$\bar{\s}(\mathbf{m})=\Phi \mathbf{m}$ for some matrix $\Phi \in
\hbox{GL}_{r}(\bar{k}(t))$. Furthermore, $M$ is called rigid
analytically trivial if there exists $\Psi\in
\hbox{GL}_{r}(\mathbb{L})$ such that $\bar{\s}(\Psi)=\Phi \Psi$.
Such a matrix $\Psi$ is called a rigid analytic trivialization of
the matrix $\Phi$.

The category of pre-$t$-motives forms an abelian
${\FF}_{q^{\ell}}(t)$-linear tensor category. Moreover, the category
$\mathcal{R}$ of rigid analytically trivial pre-$t$-motives forms a
neutral Tannakian category over ${\FF}_{q^{\ell}}(t)$  (for the
definition of neutral Tannakian categories, see \cite[Chap.
II]{DMOS82}). For each $M\in \mathcal{R}$, Tannakian duality asserts
that the smallest Tannakian subcategory of $\mathcal{R}$ containing
$M$ is equivalent to the finite dimensional representations of some
affine algebraic group scheme $\G_{M}$ over $\FF_{q^{\ell}}(t)$.

The group $\G_{M}$ is called the Galois group of $M$ and it can be
described explicitly as follows.  Let $\Phi\in
{\rm{GL}}_{r}(\bar{k}(t))$ be the matrix providing multiplication by
$\bar{\s}$ on a rigid analytically trivial pre-$t$-motive $M$ with
rigid analytic trivialization $\Psi\in \hbox{GL}_{r}(\mathbb{L})$.
Let $X:=(X_{ij})$ be an $r\times r$ matrix whose entries are
independent variables $X_{ij}$, and define a
$\overline{k}(t)$-algebra homomorphism $\nu :
\overline{k}(t)[X,1/\det X] \to \mathbb{L}$ so that $\nu(X_{ij}) =
\Psi_{ij}$.  We let
\begin{align*}
\Sigma_\Psi &:= \mathrm{im}\ \nu = \overline{k}(t)[\Psi,1/\det \Psi]
\subseteq \mathbb{L}, \\
Z_\Psi &:= \mathrm{Spec}\ \Sigma_{\Psi}.
\end{align*}
Then $Z_\Psi$ is a closed $\overline{k}(t)$-sub-scheme of
$\mathrm{GL}_{r}/\overline{k}(t)$.  Let $$\Psi_1,\Psi_2 \in
\mathrm{GL}_r(\mathbb{L} \otimes_{\overline{k}(t)} \mathbb{L})$$ be
the matrices satisfying $(\Psi_1)_{ij} = \Psi_{ij} \otimes 1$ and
$(\Psi_2)_{ij} = 1 \otimes \Psi_{ij}$, and let $\widetilde{\Psi} :=
\Psi_1^{-1}\Psi_2$.  Now define an $\FF_{q^\ell}(t)$-algebra
homomorphism $$\mu : \FF_{q^\ell}(t)[X,1/\det X] \to \mathbb{L}
\otimes_{\overline{k}(t)} \mathbb{L}$$ so that $\mu(X_{ij}) =
\widetilde{\Psi}_{ij}$.  Let
\begin{equation} \label{Gamma Psi}
\begin{aligned}
\Delta &:= \mathrm{im}\ \mu, \\
\Gamma_\Psi &:= \mathrm{Spec}\ \Delta.
\end{aligned}
\end{equation}
By Theorems~4.2.11, 4.3.1, and 4.5.10 of \cite{P08}, we obtain the
following theorem.

\begin{theorem} \label{Galois theory} {\rm (Papanikolas
    \cite{P08})} The scheme $\Gamma_\Psi$ is a closed
  $\FF_{q^\ell}(t)$-subgroup scheme of $\mathrm{GL}_{r} /
  \FF_{q^\ell}(t)$, which is isomorphic to the Galois group
  $\Gamma_M$ over $\FF_{q^\ell}(t)$.  Moreover $\Gamma_\Psi$ has the
  following properties:
\begin{enumerate}
\item[(a)] $\Gamma_\Psi$ is smooth over $\overline{\FF_{q}(t)}$ and
  geometrically connected.
\item[(b)] $\dim \Gamma_\Psi = \mathrm{tr.deg}_{\overline{k}(t)}\
  \overline{k}(t)(\Psi)$.
\item[(c)] $Z_\Psi$ is a $\Gamma_\Psi$-torsor over $\overline{k}(t)$.
\end{enumerate}
\end{theorem}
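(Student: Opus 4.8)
The plan is to realize $\Gamma_M$ as the Tannakian fundamental group of the subcategory $\langle M\rangle$ generated by $M$, for a natural fiber functor, and then to identify it explicitly with the matrix group $\Gamma_\Psi$; properties (a)--(c) then fall out of the torsor structure on $Z_\Psi$. (In the present paper this statement is simply quoted from \cite{P08}, so what follows is a sketch of the argument given there.) First I would construct the fiber functor $\omega$: for each $N\in\langle M\rangle$, rigid analytically trivial with $\bar{\s}(\mathbf{n})=\Phi_N\mathbf{n}$ and trivialization $\Psi_N$, set $\omega(N):=\{v\in\LL\otimes_{\overline{k}(t)}N:\bar{\s}v=v\}$. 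The columns of $\Psi_N^{-1}\mathbf{n}$ form an $\FF_{q^\ell}(t)$-basis of $\omega(N)$, so $\dim_{\FF_{q^\ell}(t)}\omega(N)=\operatorname{rank} N$, and one checks that $\omega$ is an exact, faithful, $\FF_{q^\ell}(t)$-linear tensor functor. By the Tannakian formalism \cite[Chap.~II]{DMOS82}, $\Gamma_M:=\operatorname{Aut}^{\otimes}(\omega)$ is then an affine group scheme over $\FF_{q^\ell}(t)$ with $\langle M\rangle\simeq\operatorname{Rep}(\Gamma_M,\FF_{q^\ell}(t))$; this is essentially Theorem~4.2.11 of \cite{P08}.

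Next I would make $\Gamma_M$ concrete. Any two rigid analytic trivializations of $\Phi$ differ on the right by a matrix over $\FF_{q^\ell}(t)$, so $Z_\Psi=\operatorname{Spec} \Sigma_\Psi$ should be viewed as (part of) the scheme parametrizing trivializations, and $\widetilde\Psi=\Psi_1^{-1}\Psi_2$ records the change of trivialization. The key computation is the cocycle identity $\widetilde\Psi_{13}=\widetilde\Psi_{12}\widetilde\Psi_{23}$ in $\operatorname{GL}_r(\LL\otimes\LL\otimes\LL)$, together with the facts that $\widetilde\Psi$ restricts to the identity along the multiplication map $\LL\otimes\LL\to\LL$ and that $\widetilde\Psi^{-1}$ is the flip of $\widetilde\Psi$. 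Transporting these relations through $\mu$ shows that $\Delta$ is a Hopf subalgebra of $\FF_{q^\ell}(t)[X,1/\det X]$ with comultiplication $X\mapsto X\otimes X$, so $\Gamma_\Psi=\operatorname{Spec} \Delta$ is a closed subgroup scheme of $\operatorname{GL}_r/\FF_{q^\ell}(t)$; the identity $\Psi_2=\Psi_1\widetilde\Psi$ then exhibits $Z_\Psi$ as a right $\Gamma_\Psi$-torsor over $\overline{k}(t)$, which is (c). One then verifies that the tautological $\Gamma_\Psi$-action on $\omega(M)$ agrees with the $\Gamma_M$-action --- equivalently, that $Z_\Psi$ represents the $\Gamma_M$-torsor of tensor isomorphisms from $\omega$ to the ``trivialized'' realization --- which yields $\Gamma_M\cong\Gamma_\Psi$ (Theorems~4.3.1 and 4.5.10 of \cite{P08}).

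Part (b) now follows from (c): after base change to $\LL$ the torsor $Z_\Psi$ acquires the point $\Psi$, hence becomes isomorphic to $\Gamma_\Psi\times\LL$, so $\dim\Gamma_\Psi=\dim Z_\Psi=\operatorname{tr.deg}_{\overline{k}(t)}\operatorname{Frac}(\Sigma_\Psi)=\operatorname{tr.deg}_{\overline{k}(t)}\overline{k}(t)(\Psi)$, using that $\Sigma_\Psi\subseteq\LL$ is a domain. For (a): $Z_\Psi$ is integral, and in fact \emph{geometrically} integral over $\overline{k}(t)$ because the field generated by the entries of $\Psi$ is separably generated over $\overline{k}(t)$ --- a property of rigid analytic trivializations established in \cite{P08}; the torsor isomorphism over a geometric point then transfers geometric reducedness and connectedness from $Z_\Psi$ to $\Gamma_\Psi$, and over the algebraically closed field $\overline{\FF_q(t)}$ a reduced finite-type group scheme is automatically smooth, which gives (a). I expect the main obstacle to be the identification $\Gamma_M\cong\Gamma_\Psi$: one must show that the matrix entries $\widetilde\Psi_{ij}$ capture \emph{all} Tannakian automorphisms and impose \emph{no} spurious relations, i.e.\ that $Z_\Psi$ is exactly the Galois torsor rather than a proper closed subscheme of it --- this is where the analogue of the Picard--Vessiot ``no new constants'' principle does the real work. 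Establishing genuine smoothness, rather than mere reducedness, of $\Gamma_\Psi$ in characteristic $p$ is a secondary subtlety.
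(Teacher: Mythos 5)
The paper does not prove this theorem; it is cited from Papanikolas \cite{P08} (specifically Theorems 4.2.11, 4.3.1, and 4.5.10) with no argument reproduced, so there is no in-paper proof to compare against, and you acknowledge this yourself.

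As a reconstruction of the \cite{P08} argument, your sketch has the right skeleton: the Betti-realization fiber functor $\omega(N)=(\LL\otimes_{\overline{k}(t)}N)^{\bar{\s}}$, the cocycle relations making $\Delta$ a Hopf subalgebra so that $\Gamma_\Psi$ is a closed subgroup scheme, the identity $\Psi_2=\Psi_1\widetilde\Psi$ exhibiting $Z_\Psi$ as a $\Gamma_\Psi$-torsor, and the difference-field analogue of the Picard--Vessiot no-new-constants lemma ($\LL^{\bar{\s}}=\FF_{q^\ell}(t)$) doing the real work in matching $\Gamma_\Psi$ with the Tannakian group $\Gamma_M$. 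These are the correct ingredients in the correct order, and (b) does indeed fall out of (c) by base change to a point. Two clarifications on (a). First, what is needed for geometric reducedness of $Z_\Psi$ is that its function field be \emph{separable} over $\overline{k}(t)$; for finitely generated extensions this is equivalent to being separably generated (Mac Lane), so your phrasing is acceptable, but in \cite{P08} this separability is a proved structural fact about rigid analytic trivializations rather than a property one gets for free. Second, the reason the theorem states smoothness over $\overline{\FF_q(t)}$ rather than over $\FF_{q^\ell}(t)$ is that ``reduced of finite type over a field implies smooth'' requires the base field to be perfect, which $\FF_{q^\ell}(t)$ is not; you do invoke the algebraic closure at the right moment, but it is worth being explicit that this base change is load-bearing and not cosmetic.
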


We call $\Gamma_{\Psi}$ the Galois group of the functional equation
$\bar{\s} \Psi=\Phi \Psi$. Here we note that $\Gamma_{\Psi}$ can be
regarded as a linear algebraic group over ${\FF}_{q^{\ell}}(t)$
because of Theorem \ref{Galois theory}(a).

Finally, we review the definition of $t$-motives and the main
theorem of \cite{P08}. Let $\bar{k}[t,\bar{\s}]$ be the
non-commutative subring of $\bar{k}(t)[\bar{\s},\bar{\s}^{-1}]$
generated by $t$ and $\bar{\s}$ over $\bar{k}$. An Anderson
$t$-motive (cf.\ \cite{A86,ABP04}) is a left
$\bar{k}[t,\bar{\s}]$-module $\mathcal{M}$ which is free and
finitely generated both as left $\bar{k}[t]$-module and left
$\bar{k}[\bar{\s}]$-module and which satisfies
\[
  (t-{\th})^{N}\mathcal{M}\subseteq \bar{\s}\mathcal{M},
\]
for $N\in \NN$ sufficiently large.  Let ${\bf{m}}$ be a
$\bar{k}[t]$-basis of $\mathcal{M}$. Multiplication by $\bar{\s}$ on
$\mathcal{M}$ is represented by $\bar{\s}{\bf{m}}=\Phi {\bf{m}} $
for some matrix $\Phi\in \hbox{Mat}_{r}(\bar{k}[t])\cap
\hbox{GL}_{r}(\bar{k}(t))$. By tensoring $\mathcal{M}$ with
$\bar{k}(t)$ over $\bar{k}[t]$, $\mathcal{M}$ corresponds to a
pre-$t$-motive $\bar{k}(t)\otimes_{\bar{k}[t]}\mathcal{M}$ given by
$$\bar{\s}(f\otimes m):=f^{(-\ell)}\otimes \bar{\s}m,\hbox{
}f\in\bar{k}(t), m\in \mathcal{M}.$$  Furthermore, $\mathcal{M}$ is called rigid analytically trivial if there exists $\Psi\in \hbox{Mat}_{r}(\mathbb{T})\cap
\hbox{GL}_{r}(\mathbb{L})$ so that $
  \bar{\s}\Psi:=\Psi^{(-\ell)}=\Phi \Psi$.
Rigid analytically trivial pre-$t$-motives that can be constructed
from Anderson $t$-motives using direct sums, subquotients, tensor
products, duals and internal Hom's, are called $t$-motives. These
$t$-motives form a neutral Tannakian category over
$\FF_{q^\ell}(t)$.  For a $t$-motive $M$ with rigid analytic
trivialization $\Psi$, $\Psi(\th)^{-1}$ is called a period matrix of
$M$. The fundamental Theorem of \cite{P08} can be stated as follows:

\begin{theorem}{\rm{(Papanikolas \cite{P08})}}  \label{tr.deg and dim}
  Let $M$ be a $t$-motive with
  Galois group $\Gamma_{M}$. Suppose that $\Phi\in
  {\rm{GL}}_{r}(\bar{k}(t))\cap {\rm{Mat}}_{r}(\bar{k}[t])$ represents
  multiplication by $\bar{\s}$ on $M$ and that
  $\det\Phi=c(t-{\th})^{s}$, $c\in \bar{k}^{\times}$.  Let $\Psi$ be a
  rigid analytic trivialization of $\Phi$ in
  ${\rm{GL}}_{r}(\mathbb{L})\cap {\rm{Mat}}_{r}(\mathbb{E})$ and let
  $\bar{k}(\Psi(\th))$ be the field generated by the entries of
  $\Psi({\th})$ over $\bar{k}$. Then
\[
  {\rm{dim}}\hbox{ }\Gamma_{M}={\rm{tr.deg}}_{\bar{k}}
  \hbox{ }\bar{k}(\Psi(\th)).
\]
\end{theorem}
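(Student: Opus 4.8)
The plan is to deduce Theorem~\ref{tr.deg and dim} from Theorem~\ref{Galois theory} by showing that the abstractly-defined specialization at $t=\theta$ does not lose transcendence degree; that is, the field $\bar k(\Psi(\theta))$ has the same transcendence degree over $\bar k$ as $\bar k(t)(\Psi)$ has over $\bar k(t)$, which by Theorem~\ref{Galois theory}(b) equals $\dim\Gamma_M=\dim\Gamma_\Psi$. Combining these two equalities gives the result. So the entire content is the inequality
\[
\mathrm{tr.deg}_{\bar k}\,\bar k(\Psi(\theta))\ \ge\ \mathrm{tr.deg}_{\bar k(t)}\,\bar k(t)(\Psi),
\]
the reverse inequality being clear since specialization can only decrease transcendence degree. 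The hypotheses that $\Psi\in\mathrm{Mat}_r(\mathbb E)$ (so that the entries are genuinely entire and can be evaluated at $\theta$ with values in $\bar k_\infty$) and that $\det\Phi=c(t-\theta)^s$ are exactly what is needed to make this work.

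First I would recall the torsor structure from Theorem~\ref{Galois theory}(c): $Z_\Psi=\mathrm{Spec}\,\Sigma_\Psi$ with $\Sigma_\Psi=\bar k(t)[\Psi,1/\det\Psi]$ is a $\Gamma_\Psi$-torsor over $\bar k(t)$, so in particular $\dim Z_\Psi=\dim\Gamma_\Psi$ and $Z_\Psi$ is smooth and geometrically irreducible over $\bar k(t)$. The idea is then to produce a $\bar k$-algebra homomorphism ("evaluation at $\theta$") from a suitable model of $\Sigma_\Psi$ to $\bar k_\infty$ sending each $\Psi_{ij}$ to $\Psi_{ij}(\theta)$, and to argue that the image point is a \emph{smooth, dimension-preserving} specialization. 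The key technical point is that the entries of $\Psi$ are entire, and $\det\Phi$ vanishes only at $t=\theta$ (to order $s$), so $\det\Psi$ is a unit near $t=\theta$ in an appropriate sense, i.e.\ $\det\Psi(\theta)\ne0$; hence $X_{ij}\mapsto\Psi_{ij}(\theta)$ does define a point of $\mathrm{GL}_r(\bar k_\infty)$ and lies in the closure $Z_\Psi$ reduced mod the maximal ideal $(t-\theta)$. One then needs that this specialized point is a generic-enough point of $Z_\Psi$ — concretely, that the dimension of the Zariski closure over $\bar k$ of the coordinates $\Psi_{ij}(\theta)$ is at least $\dim Z_\Psi$.

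The mechanism to see this is the following: consider $\Sigma_\Psi\subseteq\mathbb E\subseteq\mathbb T$, and localize/complete at the prime $(t-\theta)$; because $\det\Psi$ is invertible there, the fiber $Z_\Psi\times_{\bar k(t)}\bar k(t)[[t-\theta]]$, or better the $(t-\theta)$-adic completion of a finitely-generated $\bar k[t]$-model of $\Sigma_\Psi$ localized away from $\det\Psi$, is faithfully flat, and the special fiber has the same dimension as the generic fiber. Evaluation at $t=\theta$ is then a $\bar k$-point of this special fiber. Since $Z_\Psi$ is a torsor under the connected group $\Gamma_\Psi$, it is in fact a smooth scheme, all of whose points are "generic" in the dimension sense — any $\bar k$-point of the special fiber whose coordinates we specialize still sees the full dimension provided the specialization map is flat, which it is. I expect this flatness/dimension-preservation step to be the main obstacle: one must carefully choose the finitely generated model, verify it is smooth (or at least flat) over $\bar k[t]_{(t-\theta)}$ after inverting $\det\Psi$, and check that the resulting $\bar k$-algebra surjects onto $\bar k[\Psi(\theta),1/\det\Psi(\theta)]$ with the expected Krull dimension; this is where the hypotheses $\Psi\in\mathrm{Mat}_r(\mathbb E)$ and $\det\Phi = c(t-\theta)^s$ are indispensable. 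Once this is in place, $\mathrm{tr.deg}_{\bar k}\,\bar k(\Psi(\theta))=\dim(\text{closure of the point})\ge\dim Z_\Psi=\dim\Gamma_\Psi=\dim\Gamma_M$, and together with the trivial reverse inequality and Theorem~\ref{Galois theory}(b) we conclude $\dim\Gamma_M=\mathrm{tr.deg}_{\bar k}\,\bar k(\Psi(\theta))$.
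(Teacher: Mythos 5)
There is a genuine gap, and it is not a small one. Your reduction to Theorem~\ref{Galois theory}(b) is the right first move, and your identification of the essential inequality
\[
\mathrm{tr.deg}_{\bar k}\,\bar k(\Psi(\theta))\ \ge\ \mathrm{tr.deg}_{\bar k(t)}\,\bar k(t)(\Psi)
\]
is exactly correct (and the reverse inequality is indeed elementary). But the mechanism you propose for establishing it --- localizing a finitely generated $\bar k[t]$-model of $\Sigma_\Psi$ at $(t-\theta)$, inverting $\det\Psi$, and invoking flatness to claim that the specialized point sees the full dimension --- cannot work. Flatness of a model controls the dimension of the \emph{special fiber as a scheme}, but says nothing about the dimension of the Zariski closure over $\bar k$ of one particular $\bar k_\infty$-point of that fiber. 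The closure of $\Psi(\theta)$ in $\mathbb A^{r^2}_{\bar k}$ has dimension equal to $\mathrm{tr.deg}_{\bar k}\,\bar k(\Psi(\theta))$, and no amount of commutative algebra prevents this from being smaller than $\dim Z_\Psi$. To see this concretely, take $r=1$, $\Phi=(t-\theta)$, $\Psi=\Omega$: then $Z_\Psi$ has dimension $1$, the special fiber of any flat model is $1$-dimensional, and yet the desired conclusion is precisely the transcendence of $\Omega(\theta)=-1/\tilde\pi$ --- which your argument would have to prove but does not. The torsor structure and smoothness of $Z_\Psi$ do not rescue this; they are properties over $\bar k(t)$, not over $\bar k$.

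The step you flag as ``the main obstacle'' is in fact the entire content of the theorem, and it requires genuinely new transcendence-theoretic input, not algebraic geometry. In \cite{P08} (building on the linear independence criterion of \cite{ABP04}), what makes the specialization dimension-preserving is the following deep fact about solutions of Frobenius difference equations: if $P\in\bar k[X_{ij},1/\det X]$ satisfies $P(\Psi(\theta))=0$, then there exists $\widetilde P\in\bar k[t][X_{ij},1/\det X]$ with $\widetilde P(\Psi)=0$ identically and $\widetilde P|_{t=\theta}=cP$ for some $c\in\bar k^\times$. In other words, every algebraic relation among the $\Psi_{ij}(\theta)$ over $\bar k$ lifts to a relation among the $\Psi_{ij}$ over $\bar k(t)$. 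This lifting is proved by an iteration of the ABP criterion, which crucially uses the entirety of the $\Psi_{ij}$, the growth estimates defining $\mathbb E$, and the specific shape $\det\Phi=c(t-\theta)^s$ (to control $\det\Psi$ and to show $\det\Psi(\theta)\ne 0$). It is a transcendence theorem, analogous in spirit to the W\"ustholz analytic subgroup theorem rather than to any flatness statement. Note also that the present paper quotes this theorem from \cite{P08} without proof; there is no in-paper argument to compare against, but the approach you outline does not reproduce the one in \cite{P08} and, as it stands, does not constitute a proof.
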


\section{Arithmetic gamma values and $t$-motives}
\subsection{Basic properties of arithmetic gamma values}
We are interested in the special values $r! \in k_\infty$ for $r\in
\QQ \cap \ZZ_p$ (see Section 1). We see from the definition that
$r!\in k$ for a non-negative integer $r$.  For $r$ a negative
integer, $r!$ is a $k^{\times}$-multiple of
$\tilde{\pi}:=\tilde{\pi}_{1}$ (defined in Section 3.2)
\cite[p.~34]{T91}, and it is thus transcendental over $k$. Moreover,
for $r\in \QQ\cap (\ZZ_p \setminus \ZZ)$, $r!$ depends up to
multiplication by a factor in $\bar{k}$ only on $r$ modulo $\ZZ$
\cite{T91}.  Hence, without loss of generality we focus on those
$r!$ with $-1 < r < 0$.

Given such an $r$, write $ r=\frac{a}{b}$, where $a$ and $b$ are
integers and $b$ is not divisible by $p$. By Fermat's little theorem
we see that $b$ divides $q^{\ell}-1$ for some $\ell \in \NN$. Hence
$r$ can be written in the form
$$r=\frac{c}{1- q^{\ell}} \hbox{ for some }0<c<q^{\ell}-1 . $$
Write $c=\sum _{i=0}^{\ell-1} c_{i} q^{i} $, with $0 \leq c_{i}< q
$. By the definition of $r!$ we see that
\begin{equation}\label{formula 1}
r!=\prod_{i=0}^{\ell-1} (\frac{q^{i}}{1-q^{\ell}} )!^{c_{i}}.
\end{equation}
Hence, to determine all the algebraic relations among
$$\left\{ (\frac{1}{1-q^{\ell}})!, (\frac{2}{1-q^{\ell}})!,\ldots,(\frac{q^{\ell}-2}{1-q^{\ell}})!     \right\} ,$$
we need only concentrate on these $\ell$ values
$$\left\{ (\frac{1}{1-q^{\ell}})!, (\frac{q}{1-q^{\ell}})!,\ldots,(\frac{q^{\ell-1}}{1-q^{\ell}})!     \right\} .$$

\subsection{The Carlitz $\FF_{q^{\ell}}[t]$-module and its Galois
group}\label{subsection for C ell} For a fixed positive integer
$\ell$, we recall the Carlitz $\FF_{q^{\ell}}[t]$-module, denoted by
$C_{\ell}$, which is given by the $\FF_{q^{\ell}}$-linear ring
homomorphism
$$
\begin{array}{rrcl}
  C_{\ell}: & \FF_{q^{\ell}}[t] &\rightarrow  & {\rm{End}}_{\FF_{q^{\ell}}}(\GG_{a}) \\
   & t & \mapsto & (x\mapsto \th x+x^{q^{\ell}}) .\\
\end{array}
$$
One has the Carlitz exponential,
$${\rm{exp}}_{C_{\ell}}(z)=\sum_{h=0}^{\infty} \frac{z^{q^h}}{D_h} = z \prod_{0\neq a \in \FF_{q^{\ell}}[\th] } (1-\frac{z}{a \tilde{\pi}_{\ell} }   ),   $$
where
\begin{equation}\label{def of tilde pi}
\tilde{\pi}_{\ell}:=\th ( -\th )^{\frac{1}{q^{\ell} -1 }}
\prod_{i=1}^{\infty}(  1-  \frac{\th}{\th^{q^{\ell i}} } )^{-1}
\end{equation}
is a fundamental period of $C_{\ell}$ over $\FF_{q^{\ell}}[t]$.
Throughout this paper we fix a choice of $(-\th)^{\frac{1}{q^{\ell}
-1}}$ so that $\tilde{\pi}_{\ell}$ is a well-defined element in
$\overline{k_{\infty}}$. We also choose these roots in a compatible
way so that when $\ell | \ell'  $ the number
$(-\th)^{\frac{1}{q^{\ell}-1}}$ is a power of
$(-\th)^{\frac{1}{q^{\ell'}-1}}$.

We can regard $C_{\ell}$ also as a Drinfeld $\FF_{q}[t]$-module, and
then it is of rank $\ell$ with complex multiplication by
$\FF_{q^{\ell}}[t]$ (see \cite{G96}, and \cite{T04}). There is a
canonical $t$-motive associated to this Drinfeld $\FF_{q}[t]$-module
$C_{\ell}$, which we denote by $M_{\ell}$.  Its construction is
given below (cf. \cite[Sec. 2.4]{CP08}).

Define $\Phi_{\ell}:=(t-\th)$ if $\ell=1$, and otherwise let
$$
\Phi_{\ell}:=\left[%
\begin{array}{ccccc}
  0 & 1 & 0 & \cdots & 0\\
  0 & 0 & 1 & \cdots & 0 \\
  \vdots & \vdots & \ddots & \ddots & \vdots \\
  0 & 0 & \cdots & 0 & 1 \\
  (t-\th) & 0 & 0 & \cdots & 0 \\
\end{array}%
\right]\in {\rm{GL}}_{\ell}(\bar{k}(t))\cap
{\rm{Mat}}_{\ell}(\bar{k}[t]).
$$ Let $\xi_{\ell}$ be a primitive element of $\FF_{q^{\ell}}$ and
define $\Psi_{\ell}:=\Omega_{\ell}$ if $\ell=1$, and otherwise let
$$\Psi_{\ell}:=\left[%
\begin{array}{cccc}
  \Omega_{\ell} & \xi_{\ell}\Omega_{\ell} & \cdots & \xi_{\ell}^{\ell -1}\Omega_{\ell}   \\
  \Omega_{\ell}^{(-1)} & (\xi_{\ell}\Omega_{\ell} )^{(-1)} & \cdots & ( \xi_{\ell}^{\ell -1}\Omega_{\ell} )^{(-1)} \\
  \vdots & \vdots & \ddots & \vdots \\
  \Omega_{\ell}^{(-(\ell-1))} & ( \xi_{\ell}\Omega_{\ell})^{(-1)} & \cdots & (\xi_{\ell}^{\ell -1}\Omega_{\ell}   )^{(-(\ell-1))} \\
\end{array}%
\right] \in  {\rm{Mat}}_{\ell}(\TT),
$$where \[
\Omega_{\ell}(t):=(-{\th})^{\frac{-q^{\ell}}{q^{\ell}-1}}\prod_{i=1}^{\infty}
\biggl(1-\frac{t}{{\th}^{q^{\ell i}}} \biggr)\in
\overline{k_{\infty}}[[t]]\subseteq \mathbb{C}_{\infty}((t)).
\]
Observe that $\Omega_{\ell}$ is an entire function and that $\Omega_{\ell}(\th)=\frac{-1}{\tilde{\pi}_{\ell}}$.
Moreover, one has the following functional equations,
\begin{equation}\label{fun equ for Omega ell}
\Omega_{\ell}^{(-\ell)}=(t-\th) \Omega_{\ell},
\end{equation}
\begin{equation}\label{func equ for Psi ell}
\Psi_{\ell}^{(-1)}=\Phi_{\ell}\Psi_{\ell}.
\end{equation}
Since $\{1,\xi_{\ell},\ldots,\xi_{\ell}^{\ell-1}\}$ is a basis of
$\FF_{q^{\ell}}$ over $\FF_{q}$, we have that $\Psi_{\ell}\in
{\rm{GL}}_{\ell}(\LL)$.

Let $M_{\ell}$ be an $\ell$-dimensional vector space over
$\bar{k}(t)$ with a basis $\mathbf{m}\in \hbox{Mat}_{\ell \times
1}(M_{\ell})$.  We give $M_{\ell}$ the structure of a left
$\bar{k}(t)[\s,\s^{-1}]$-module by defining
$\s\mathbf{m}:=\Phi_{\ell} \mathbf{m}$, thus making $M_\ell$ a
pre-$t$-motive.  One directly checks that this pre-$t$-motive $M_{\ell}$ is in
fact a $t$-motive with rigid analytic trivialization provided by
$\Psi_{\ell}$.  Working out its
Galois group, we have the following Lemma.

\begin{lemma}\label{Galois froup of M ell}
Let $M_{\ell}$ be the $t$-motive defined above.  Then its Galois
group $\G_{M_{\ell}}\subseteq {\rm{GL}}_{\ell}/ \FF_{q}(t) $ is an
$\ell$-dimensional torus over $\FF_{q}(t)$.
\end{lemma}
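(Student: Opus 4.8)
The plan is to compute $\Gamma_{\Psi_\ell}$ directly using Theorem~\ref{Galois theory}, first obtaining an upper bound on its dimension from part (b) and then producing enough algebraically independent period entries to match that bound, while simultaneously identifying the group-scheme structure. The natural guess is that $\Gamma_{M_\ell}$ is the Weil restriction of scalars $\mathrm{Res}_{\FF_{q^\ell}(t)/\FF_q(t)}\GG_m$, which is an $\ell$-dimensional torus over $\FF_q(t)$; since $M_\ell$ is, as a Drinfeld $\FF_q[t]$-module of rank $\ell$, nothing but the Carlitz $\FF_{q^\ell}[t]$-module viewed with CM by $\FF_{q^\ell}[t]$, this is exactly what the ``CM by a constant field extension'' philosophy in the introduction predicts. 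So the goal is to verify this concretely.

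First I would record the structural input: the columns of $\Psi_\ell$ are the $\sigma$-twists of the single entire function $\Omega_\ell$ scaled by $1,\xi_\ell,\dots,\xi_\ell^{\ell-1}$, so $\Sigma_{\Psi_\ell}=\bar k(t)[\Psi_\ell,1/\det\Psi_\ell]$ is generated over $\bar k(t)$ by $\Omega_\ell,\Omega_\ell^{(-1)},\dots,\Omega_\ell^{(-(\ell-1))}$ and $1/\Omega_\ell$. The functional equation \eqref{fun equ for Omega ell}, $\Omega_\ell^{(-\ell)}=(t-\th)\Omega_\ell$, shows that higher twists are $\bar k(t)$-multiples of these $\ell$ functions, so $\mathrm{tr.deg}_{\bar k(t)}\bar k(t)(\Psi_\ell)\le \ell$; by Theorem~\ref{Galois theory}(b) this gives $\dim\Gamma_{\Psi_\ell}\le\ell$. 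Next I would exhibit $\Gamma_{\Psi_\ell}$ as a subgroup of $\mathrm{Res}_{\FF_{q^\ell}/\FF_q}\GG_m$ over $\FF_q(t)$: this is the computation that after base change to $\FF_{q^\ell}(t)$ one may diagonalize $\Phi_\ell$ over $\overline{\FF_q(t)}(t)$ (its characteristic polynomial is $X^\ell-(t-\th)$, separable), so $\Gamma_{M_\ell}$ over $\FF_{q^\ell}(t)$ embeds into a product of $\ell$ copies of $\GG_m$ permuted transitively by the $\FF_{q^\ell}/\FF_q$-Frobenius, which is precisely the defining presentation of the Weil restriction torus. Because $C_\ell$ has CM by $\FF_{q^\ell}[t]$, multiplication by each element of $\FF_{q^\ell}[t]^\times$ is an endomorphism of $M_\ell$, which forces $\Gamma_{M_\ell}$ to commute with the regular representation of $\FF_{q^\ell}(t)$ and hence land inside this maximal torus; alternatively one invokes \cite[Sec.~2.4]{CP08} for the identical computation. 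Since a torus has only the tautological algebraic subgroups compatible with being geometrically connected of a given dimension, it then suffices to show $\dim\Gamma_{\Psi_\ell}\ge\ell$.

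For the lower bound I would show $\mathrm{tr.deg}_{\bar k(t)}\bar k(t)(\Omega_\ell,\Omega_\ell^{(-1)},\dots,\Omega_\ell^{(-(\ell-1))})=\ell$, equivalently that these $\ell$ twists are algebraically independent over $\bar k(t)$. This is the main obstacle. The clean way is Theorem~\ref{tr.deg and dim}: the ``specialization at $t=\th$'' of $\Psi_\ell$ has entries that are, up to $\bar k$-multiples (using $\Omega_\ell(\th)=-1/\tilde\pi_\ell$ and its twists), the periods $\tilde\pi_\ell^{(-j)}$; one has the Carlitz-period transcendence theory of the fourth author (Wade/Yu) giving $\mathrm{tr.deg}_{\bar k}\bar k(\tilde\pi_\ell)=1$ for each $\ell$ together with the fact that the distinct twists $\tilde\pi_\ell^{(-j)}$ are algebraically independent — this is where one uses that $\FF_{q^\ell}/\FF_q$ genuinely has degree $\ell$, so the $\ell$ embeddings are distinct and, via $\det\Phi_\ell=(-1)^{\ell-1}(t-\th)$ having the required shape $c(t-\th)^s$, Theorem~\ref{tr.deg and dim} applies to give $\dim\Gamma_{M_\ell}=\mathrm{tr.deg}_{\bar k}\bar k(\Psi_\ell(\th))=\ell$. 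Combining, $\Gamma_{M_\ell}$ is an $\ell$-dimensional geometrically connected subtorus of the $\ell$-dimensional torus $\mathrm{Res}_{\FF_{q^\ell}/\FF_q}\GG_m$, hence equals it, proving the Lemma. The delicate point to get right is exactly the algebraic independence of the twisted periods $\tilde\pi_\ell,\tilde\pi_\ell^{(1/q)},\dots$ — i.e.\ that passing from $\FF_q$ to $\FF_{q^\ell}$ multiplies the transcendence degree by $\ell$ rather than leaving it at $1$ — and I expect the proof to lean on the CM structure plus \cite{CP08} to handle this cleanly rather than reproving it from scratch.
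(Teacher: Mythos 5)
Your outer plan --- upper bound $\dim\Gamma_{M_\ell}\le\ell$ from Theorem~\ref{Galois theory}(b), identify $\Gamma_{M_\ell}$ as a subtorus of a form of $\GG_m^\ell$ after base change, then match with a lower bound --- is the right shape, and the torus identification is morally the paper's too. The paper makes this concrete by passing to $\bar{\s}:=\s^\ell$: the diagonal matrix $\widetilde{\Psi_\ell}=\mathrm{diag}(\Omega_\ell,\Omega_\ell^{(-1)},\ldots,\Omega_\ell^{(-(\ell-1))})$ is a rigid analytic trivialization for the $\bar{\s}$-equation by \eqref{fun equ for Omega ell}, so by the presentation \eqref{Gamma Psi} the Galois group after base change to $\FF_{q^\ell}(t)$ sits inside the diagonal split torus, and the identity $\G_{\mathbf{M}_\ell}\cong\G_{M_\ell}\times_{\FF_q(t)}\FF_{q^\ell}(t)$ gives the torus structure of $\G_{M_\ell}$ over $\FF_q(t)$.

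The genuine gap is in your lower bound. You propose specializing at $t=\th$ and appealing to Theorem~\ref{tr.deg and dim}, which requires knowing that the twisted periods $\tilde\pi_\ell,\tilde\pi_\ell^{(-1)},\ldots,\tilde\pi_\ell^{(-(\ell-1))}$ are algebraically independent over $\bar{k}$ --- and you yourself flag this as the ``delicate point'' you would have to outsource to \cite{CP08} or the CM structure. That route is circular in spirit: Theorem~\ref{tr.deg and dim} converts $\dim\Gamma_{M_\ell}$ into precisely that transcendence degree, so using it to conclude the dimension presupposes an independent proof of the period independence, which is the content of Corollary~\ref{M ell} (a consequence of this Lemma, not a prior input). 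The paper avoids periods entirely: working over $\bar{k}(t)$ and using Theorem~\ref{Galois theory}(b), it supposes $\G_{\widetilde{\Psi_\ell}}$ is a proper subtorus of the diagonal torus $T$, hence lies in the kernel of a nontrivial character $X_1^{m_1}\cdots X_\ell^{m_\ell}$; substituting $X_i\mapsto(1/\Omega_\ell^{(-i+1)})\otimes\Omega_\ell^{(-i+1)}$ as in \eqref{Gamma Psi} yields $\prod_{i=1}^{\ell}(\Omega_\ell^{(-i+1)})^{m_i}=\beta\in\bar{k}(t)$. The zero set of $\Omega_\ell^{(-i+1)}$ is $\{\th^{q^{\ell j-i+1}}:j\ge1\}$, and these $\ell$ sets are pairwise disjoint, while $\beta$ has only finitely many zeros and poles; comparing orders of vanishing at $\th^{q^{\ell j-i+1}}$ for each $i$ and $j$ large forces every $m_i=0$, a contradiction. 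This elementary vanishing-order argument over $\bar{k}(t)$ is the key idea your proposal is missing.
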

\begin{proof}

Let $\bar{\s}:=\s^{\ell}$. Since $M_{\ell}$ is a left
$\bar{k}(t)[\s,\s^{-1}]$-module, it also can be regarded as a left
$\bar{k}(t)[\bar{\s},\bar{\s}^{-1}]$-module given by $
\bar{\s}\mathbf{m}= \widetilde{\Phi_{\ell}}\mathbf{m}$, where
$$\widetilde{\Phi_{\ell}}:=\Phi_{\ell}^{(-(\ell-1))}\cdots \Phi_{\ell}^{(-1)}\Phi_{\ell}.$$
To distinguish the two different roles of $M_{\ell}$, we let
$\mathbf{M}_{\ell}$ have the same underlying space as
$M_{\ell}$, but we regard $\mathbf{M}_{\ell}$ to be a left $\bar{k}(t)[
\bar{\s},\bar{\s}^{-1} ]$-module. Note that $\mathbf{M}_{\ell}$ is a
rigid analytically trivial pre-$t$-motive because
\begin{equation}\label{fun equa of Psi ell for bar s}
\bar{\s}\Psi_{\ell}=\widetilde{\Phi_{\ell}}\Psi_{\ell}.
\end{equation}
Since by Theorem \ref{Galois theory} the Galois group
$\G_{\mathbf{M}_{\ell}}/\FF_{q^\ell}(t)$ of $\mathbf{M}_{\ell}$ is isomorphic to the
Galois group of the functional equation $\eqref{fun equa of Psi ell
for bar s}$, by \eqref{Gamma Psi} we see that
\begin{equation}\label{scalar ext of G M ell}
 \G_{\mathbf{M}_{\ell} }\cong \G_{M_{\ell}}\times_{\FF_{q}(t)}
\FF_{q^{\ell}}(t)  \hbox{ over }\FF_{q^{\ell}}(t).
\end{equation}

Now let $\widetilde{\Psi_{\ell}}$ be the diagonal matrix with entries
$$ \Omega_{\ell},\Omega_{\ell}^{(-1)},\ldots,\Omega_{\ell}^{(-(\ell-1))}.
$$
Since $\widetilde{\Phi_{\ell}}$ is equal to the diagonal matrix with
entries
$$(t-\th),(t-\th)^{(-1)},\ldots,(t-\th)^{(-(\ell-1))},$$
using \eqref{fun equ for Omega ell} we have $\bar{\s}
\widetilde{\Psi_{\ell}}=\widetilde{\Phi_{\ell}}\widetilde{\Psi_{\ell}}$.
That is, $\widetilde{\Psi_{\ell}}$ is also a rigid analytic
trivialization of $\widetilde{\Phi_{\ell}}$ with respect to the
operator $\bar{\s}$ and hence
$$
\G_{\mathbf{M}_{\ell}} \cong \G_{\widetilde{\Psi_{\ell}}}\hbox{ over
}\FF_{q^{\ell}} (t).
$$
On the other hand, since $\widetilde{\Psi_{\ell}}$ is a diagonal
matrix, by \eqref{Gamma Psi} $\G_{\widetilde{\Psi_{\ell}}}$ is a
split torus inside $\hbox{GL}_{\ell}/ \FF_{q^{\ell}}(t)$. Therefore,
from \eqref{scalar ext of G M ell} we see that $\G_{M_{\ell}}$ is a
torus over $\FF_{q}(t)$.

To prove ${\rm{dim}}\hbox{ }\G_{M_{\ell}}=\ell$, it suffices by
Theorem \ref{Galois theory} to show that the transcendence degree of
the field
$\bar{k}(t)(\Psi_{\ell})=\bar{k}(t)(\Omega_{\ell},\Omega_{\ell}^{(-1)},\ldots,\Omega_{\ell}^{(-(\ell-1))})$
over $\bar{k}(t)$ is $\ell$. Now, we let $X_{1},\ldots,X_{\ell}$ be
the coordinates of the $\ell$-dimensional split torus $T$ in
${\rm{GL}_{\ell}}/ \FF_{q^{\ell}}(t)$.  Suppose that
$\G_{\widetilde{\Psi_{\ell}}}$ is a proper subtorus of $T$. Then
$\G_{\widetilde{\Psi_{\ell}}}$ is the kernel of some characters of
$T$, i.e., canonical generators of the defining ideal of
$\G_{\widetilde{\Psi_{\ell}}}$ can be chosen of the form
${X_{1}}^{m_{1}}\cdots {X_{\ell}}^{m_{\ell}}-1$ for integers
$m_{1},\ldots,m_{\ell}$, not all zero. By \eqref{Gamma Psi},
replacing each $X_{i}$ by $ (1/ \Omega_{\ell}^{(-i+1)} ) \otimes
\Omega_{\ell}^{(-i+1)}\in \mathbb{L}\otimes_{\bar{k}(t)}\mathbb{L}$,
we obtain
\begin{equation}\label{equation for beta}
 \prod_{i=1}^{\ell}(\Omega_{\ell}^{(-i+1)})^{m_{i}}
=:\beta \in \bar{k}(t).
\end{equation}
Note that for each $1\leq i \leq \ell$, the set of all zeros of
$\Omega_{\ell}^{(-i+1)}$ is $\left\{ \th^{q^{\ell j-i+1}}
\right\}_{j=1}^{\infty}$.  Since $\beta$ has only finitely many
zeros and poles, comparing the order of vanishing at $t=\th^{q^{\ell
j -i+1}}$ on both sides of \eqref{equation for beta}, for each
$1\leq i \leq \ell$ and for sufficiently large $j$, forces
$m_{i}=0$, and hence we obtain a contradiction.
\end{proof}

Combining Lemma \ref{Galois froup of M ell} and
Theorem \ref{tr.deg and dim}, we also have the following corollary.

\begin{corollary}\label{M ell}
\begin{equation}\label{equality 1}
 {\rm{tr.deg}}_{\bar{k}}\hbox{ } \bar{k}\left( \Omega_{\ell}(\theta),\Omega_{\ell}^{(-1)}(\theta),\ldots,\Omega_{\ell}^{(-(\ell-1))}(\theta) \right) = \ell.
\end{equation}
\end{corollary}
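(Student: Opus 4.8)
The plan is to obtain Corollary~\ref{M ell} essentially for free by combining the two results that are now available: Lemma~\ref{Galois froup of M ell}, which identifies $\G_{M_\ell}$ as an $\ell$-dimensional torus, and Theorem~\ref{tr.deg and dim}, which equates $\dim \G_{M_\ell}$ with $\mathrm{tr.deg}_{\bar k}\, \bar k(\Psi_\ell(\th))$ whenever the hypotheses on $\Phi$ and $\Psi$ hold. So the proof is really a matter of checking that $M_\ell$, $\Phi_\ell$, $\Psi_\ell$ satisfy the precise technical hypotheses of Theorem~\ref{tr.deg and dim} and then reading off the equality of fields.

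First I would record that $\Phi_\ell \in \mathrm{GL}_\ell(\bar k(t)) \cap \mathrm{Mat}_\ell(\bar k[t])$, which is stated in the construction, and that $\det \Phi_\ell = \pm(t-\th)$ (up to a sign depending on the parity of $\ell$, coming from expanding the companion-type matrix), so $\det\Phi_\ell = c(t-\th)^s$ with $c \in \bar k^\times$ and $s=1$, exactly the shape required. Next I would verify the analytic hypotheses on $\Psi_\ell$: every entry of $\Psi_\ell$ is a twist $(\xi_\ell^{j}\Omega_\ell)^{(-i)}$ of a scalar multiple of $\Omega_\ell$, and since $\Omega_\ell$ is entire (noted in the text) and twisting preserves entireness, $\Psi_\ell \in \mathrm{Mat}_\ell(\mathbb{E})$; the text already records $\Psi_\ell \in \mathrm{GL}_\ell(\LL)$, so $\Psi_\ell \in \mathrm{GL}_\ell(\LL) \cap \mathrm{Mat}_\ell(\mathbb E)$ as required. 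The functional equation $\Psi_\ell^{(-1)} = \Phi_\ell\Psi_\ell$ is \eqref{func equ for Psi ell}, so $\Psi_\ell$ is a rigid analytic trivialization of $\Phi_\ell$ with respect to $\s$. Theorem~\ref{tr.deg and dim} then yields $\dim \G_{M_\ell} = \mathrm{tr.deg}_{\bar k}\, \bar k(\Psi_\ell(\th))$.

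It then remains to identify the field $\bar k(\Psi_\ell(\th))$. The entries of $\Psi_\ell(\th)$ are the various $(\xi_\ell^{j}\Omega_\ell)^{(-i)}(\th) = \xi_\ell^{jq^{-i}}\,\Omega_\ell^{(-i)}(\th)$ for $0 \le i,j \le \ell-1$; since $\xi_\ell \in \FF_{q^\ell} \subseteq \bar k$ and twisting a constant in $\FF_{q^\ell}$ keeps it in $\FF_{q^\ell}$, each such entry lies in $\bar k \cdot \{\Omega_\ell(\th), \Omega_\ell^{(-1)}(\th), \dots, \Omega_\ell^{(-(\ell-1))}(\th)\}$, and conversely each $\Omega_\ell^{(-i)}(\th)$ appears (take $j=0$, or divide out the nonzero constant $\xi_\ell^{jq^{-i}}$). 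Hence $\bar k(\Psi_\ell(\th)) = \bar k\bigl(\Omega_\ell(\th),\Omega_\ell^{(-1)}(\th),\dots,\Omega_\ell^{(-(\ell-1))}(\th)\bigr)$, and combining with Lemma~\ref{Galois froup of M ell}'s $\dim \G_{M_\ell} = \ell$ gives \eqref{equality 1}.

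I do not expect a serious obstacle here: the corollary is a direct application of the machinery, and the only points requiring a line of care are the computation of $\det\Phi_\ell$ (a cofactor expansion of the companion matrix, giving a unit times $(t-\th)$) and the bookkeeping that multiplying the columns of $\Psi_\ell$ by the constants $\xi_\ell^j \in \FF_{q^\ell}^\times$ does not change the field generated by the entries of $\Psi_\ell(\th)$ over $\bar k$. If anything is mildly delicate it is keeping straight that $M_\ell$ was set up with the operator $\s$ (not $\bar\s = \s^\ell$), so Theorem~\ref{tr.deg and dim} is applied with $\ell = 1$ in the role of its internal parameter; but the statement of Theorem~\ref{tr.deg and dim} is phrased for a general fixed $\bar\s$, so this causes no trouble.
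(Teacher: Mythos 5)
Your proof is correct and follows the same route as the paper, which just says ``Combining Lemma~\ref{Galois froup of M ell} and Theorem~\ref{tr.deg and dim}''; you have simply made explicit the hypothesis checks (the form of $\det\Phi_\ell$, that $\Psi_\ell\in\mathrm{GL}_\ell(\LL)\cap\mathrm{Mat}_\ell(\mathbb{E})$) and the field identification $\bar k(\Psi_\ell(\th))=\bar k\bigl(\Omega_\ell(\th),\dots,\Omega_\ell^{(-(\ell-1))}(\th)\bigr)$ via the $\FF_{q^\ell}^\times$ column scalings, all of which are routine and handled correctly.
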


\subsection{Determining algebraic relations for  arithmetic Gamma values}\label{subsec  gamma}  For
nonzero elements $x$, $y\in \CC_{\infty}$, we write $x\sim y$ when
$x/ y \in \bar{k}$.

\begin{theorem}{\rm{(Thakur \cite{T91})}} For each
positive integer $\ell$, we have
\begin{equation}\label{formua 2}
\frac{ (  \frac{1}{1-q^{\ell}}  ) !   }{
(\frac{q^{\ell-1}}{1-q^{\ell}})!^{q} }\thicksim
\Omega_{\ell}(\theta).
\end{equation}
\end{theorem}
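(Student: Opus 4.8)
The plan is to relate both sides directly to the product formula defining $\tilde\pi_\ell$ and the Carlitz factorial, and to track everything up to $\bar k^\times$-multiples. First I would recall from Section 1 that for $n = \sum n_i q^i$ with $0 \le n_i < q$ one has $n! = \prod \overline{D_i}^{n_i}$, so that the values $(\tfrac{q^i}{1-q^\ell})!$ are, by Goss's interpolation, expressible as convergent infinite products in $k_\infty$; the key input I expect to use is Thakur's evaluation (from \cite{T91}) of $(\tfrac{q^i}{1-q^\ell})!$ as an explicit product, or equivalently a functional equation of the form $(\tfrac{q \cdot q^{i}}{1-q^\ell})! = (\text{explicit }\bar k\text{-factor}) \cdot (\tfrac{q^i}{1-q^\ell})!^{\,q^{-1}}$-type reflection/shift relation among these $\ell$ values and $\tilde\pi_\ell$. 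Concretely, I would aim to show that the quotient $(\tfrac{1}{1-q^\ell})! \big/ (\tfrac{q^{\ell-1}}{1-q^\ell})!^{q}$ telescopes, upon substituting the product expansions, to a simple infinite product in $\th$ that can be matched with the product defining $\Omega_\ell(\theta)$.

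Next I would make that matching precise. Recall $\Omega_\ell(\theta) = (-\theta)^{-q^\ell/(q^\ell-1)} \prod_{i \ge 1}(1 - \theta/\theta^{q^{\ell i}})$, and from \eqref{def of tilde pi} that $\tilde\pi_\ell = \theta(-\theta)^{1/(q^\ell-1)} \prod_{i\ge1}(1-\theta/\theta^{q^{\ell i}})^{-1}$, so that $\Omega_\ell(\theta) = -1/\tilde\pi_\ell$ as already noted in the excerpt. Thus \eqref{formua 2} is equivalent to the assertion $(\tfrac{1}{1-q^\ell})! \big/ (\tfrac{q^{\ell-1}}{1-q^\ell})!^{q} \sim 1/\tilde\pi_\ell$. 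I would prove this by comparing the zero/pole structure (as functions of $\theta$, or rather comparing the defining $\infty$-adic product expansions term by term): write each $(\tfrac{q^i}{1-q^\ell})!$ via $\overline{D_j}$'s using the base-$q$ digits of $q^i \cdot \frac{1}{q^\ell - 1}$ inside $\ZZ_p$, which are eventually periodic with period $\ell$, and observe that raising the $i=\ell-1$ value to the $q$-th power shifts the digit pattern by one place, so the ratio collapses to a single ``boundary'' contribution. That boundary contribution, after accounting for the leading powers of $\theta$ and $(-\theta)$, is exactly (a $\bar k$-multiple of) $\tilde\pi_\ell^{-1}$; equivalently one can invoke the analogue of the first Chowla--Selberg/period formula of \cite{T91} relating $(\tfrac{1}{1-q^\ell})!$-type products to $\tilde\pi_\ell$ directly.

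The main obstacle will be the bookkeeping of the $p$-adic digit expansions: the element $\frac{q^i}{1-q^\ell} \in \ZZ_p$ has base-$q$ expansion that is purely periodic of period $\ell$, and one must carefully write $(\tfrac{q^i}{1-q^\ell})! = \prod_j \overline{D_j}^{\,d_{i,j}}$ with $d_{i,j}$ the $j$-th digit, then verify that $d_{0,j}$ and $q \cdot d_{\ell-1, j}$ (reduced appropriately, i.e., $d_{\ell-1, j-1}$ up to carries, noting $q \cdot \overline{D_j}$ relates to $\overline{D_{j+1}}$-type terms via the recursion $D_{j+1} = (\theta^{q^{j+1}} - \theta) D_j^q$) differ only in finitely many places, with the discrepancy assembling into $\tilde\pi_\ell$. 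Handling the carries and the precise $\bar k^\times$-factor (which we are allowed to ignore since we only claim $\sim$) is where care is needed, but conceptually it is just the functional equation $\Omega_\ell^{(-\ell)} = (t-\theta)\Omega_\ell$ of \eqref{fun equ for Omega ell} evaluated at $t = \theta$ after shifting, together with $\Omega_\ell(\theta) = -1/\tilde\pi_\ell$. I would close by remarking that this is precisely the content of the cited result of Thakur and that no new argument beyond reorganizing the product formulae of \cite{T91} is required.
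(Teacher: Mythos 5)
Your proposal is correct and takes essentially the same route as the paper, whose entire proof is the one-line instruction to use $D_i/D_{i-1}^q = \theta^{q^i}-\theta$ and take one-unit parts; expanding $\frac{1}{1-q^\ell}=1+q^\ell+q^{2\ell}+\cdots$ and $\frac{q^{\ell-1}}{1-q^\ell}=q^{\ell-1}+q^{2\ell-1}+\cdots$ in base $q$ and comparing the resulting products $\prod_{j\ge 0}\overline{D_{j\ell}}$ and $\prod_{j\ge 1}\overline{D_{j\ell-1}}^q$ via that recursion is precisely the ``straightforward manipulation'' intended. One small correction to your wording: since $\deg D_i = iq^i$ one gets $\overline{D_{j\ell}}/\overline{D_{j\ell-1}}^q = 1-\theta/\theta^{q^{j\ell}}$, so the quotient does not collapse to a single ``boundary'' term but to the full infinite product $\prod_{j\ge 1}\bigl(1-\theta/\theta^{q^{j\ell}}\bigr)$, which is exactly the one-unit part of $\Omega_\ell(\theta)$ (the prefactor $(-\theta)^{-q^\ell/(q^\ell-1)}$ lying in $\bar k^\times$); also, plugging $t=\theta$ into $\Omega_\ell^{(-\ell)}=(t-\theta)\Omega_\ell$ just gives $0=0$ and is not what closes the argument.
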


\begin{theorem} Fix an integer $\ell \geq 2$.  For each $j$, $ 1\leq j \leq \ell-1$,
we have
\begin{equation}\label{formula 3}
\frac{( \frac{q^{j}}{1-q^{\ell} }    )!}{ (\frac{q^{j-1}}{1-q^{\ell}
})! ^{q} }\thicksim \Omega_{\ell}^{(-(\ell- j))}(\theta).
\end{equation}
\end{theorem}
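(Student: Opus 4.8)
The plan is to reduce \eqref{formula 3} to two completely elementary inputs: the base-$q$ digit expansions of the fractions $\frac{q^{j}}{1-q^{\ell}}$ and $\frac{q^{j-1}}{1-q^{\ell}}$, and the two-term recursion $\overline{D_{n}}=(1-\theta^{1-q^{n}})\,\overline{D_{n-1}}^{q}$. With these in hand the ratio on the left of \eqref{formula 3} telescopes into precisely the infinite product that computes $\Omega_{\ell}^{(-(\ell-j))}(\theta)$, and the discrepancy between the two sides is an explicit element of $\overline{k}^{\times}$.

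First I would record the digit expansions. In $\ZZ_{p}$ one has $\frac{1}{1-q^{\ell}}=\sum_{m\geq 0}q^{m\ell}$, hence for $1\leq j\leq\ell-1$
$$\frac{q^{j}}{1-q^{\ell}}=\sum_{m\geq 0}q^{m\ell+j},\qquad\frac{q^{j-1}}{1-q^{\ell}}=\sum_{m\geq 0}q^{m\ell+j-1}.$$
Because $0\leq j-1<j\leq\ell-1$, these really are base-$q$ expansions (distinct exponents, all digits $0$ or $1$); in particular both fractions lie in $(-1,0)$, so the definition of $r!$ applies directly and no translation by integers is needed. Unwinding the definition of the Carlitz--Goss factorial therefore gives the convergent products $\bigl(\tfrac{q^{j}}{1-q^{\ell}}\bigr)!=\prod_{m\geq 0}\overline{D_{m\ell+j}}$ and $\bigl(\tfrac{q^{j-1}}{1-q^{\ell}}\bigr)!=\prod_{m\geq 0}\overline{D_{m\ell+j-1}}$ in $k_{\infty}$.

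Next I would set up the recursion. From the product formula for $D_{n}$ one gets $D_{n}=(\theta^{q^{n}}-\theta)D_{n-1}^{q}$ immediately, and combining this with $\deg D_{n}=nq^{n}$ yields $\overline{D_{n}}=(1-\theta^{1-q^{n}})\,\overline{D_{n-1}}^{q}$ for all $n\geq 1$. Applying this with $n=m\ell+j$ (valid, since $m\ell+j\geq j\geq 1$) and dividing term by term,
$$\frac{\bigl(\tfrac{q^{j}}{1-q^{\ell}}\bigr)!}{\bigl(\tfrac{q^{j-1}}{1-q^{\ell}}\bigr)!^{q}}=\prod_{m\geq 0}\frac{\overline{D_{m\ell+j}}}{\overline{D_{m\ell+j-1}}^{q}}=\prod_{m\geq 0}\bigl(1-\theta^{1-q^{m\ell+j}}\bigr),$$
all products converging because $\overline{D_{n}}\to 1$ and $\theta^{1-q^{m\ell+j}}\to 0$. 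For the right-hand side of \eqref{formula 3}, I would twist the product formula defining $\Omega_{\ell}$: since the $(-(\ell-j))$-twist raises coefficients to the $q^{-(\ell-j)}$ power and hence moves the zero-exponents $q^{\ell i}$ of $\Omega_{\ell}$ down to $q^{\ell i-(\ell-j)}=q^{\ell(i-1)+j}$, one gets $\Omega_{\ell}^{(-(\ell-j))}(t)=c\prod_{i\geq 1}\bigl(1-t/\theta^{q^{\ell(i-1)+j}}\bigr)$ with $c=\bigl((-\theta)^{-q^{\ell}/(q^{\ell}-1)}\bigr)^{(-(\ell-j))}\in\overline{k}^{\times}$. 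Evaluating at $t=\theta$ and reindexing $m=i-1$ gives $\Omega_{\ell}^{(-(\ell-j))}(\theta)=c\prod_{m\geq 0}\bigl(1-\theta^{1-q^{m\ell+j}}\bigr)$. Comparing with the previous display shows that the left-hand side of \eqref{formula 3} equals $c^{-1}\,\Omega_{\ell}^{(-(\ell-j))}(\theta)$ with $c^{-1}\in\overline{k}^{\times}$, which is exactly \eqref{formula 3}.

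None of these steps is deep; the entire substance lies in the recursion $\overline{D_{n}}=(1-\theta^{1-q^{n}})\,\overline{D_{n-1}}^{q}$, which is the function-field incarnation of $\Gamma(x+1)=x\Gamma(x)$. The only point demanding genuine care is the index bookkeeping: one must correctly match the exponent shift induced by the twist $(-(\ell-j))$ on the zeros of $\Omega_{\ell}$ (keeping track of the sign convention $f^{(-n)}=\sum_{i}a_{i}^{q^{-n}}t^{i}$, so that a negative superscript here produces a downward shift) against the exponent set $\{m\ell+j:m\geq 0\}$ coming out of the factorial side, and one must check that reindexing the infinite products is harmless, which it is since $\overline{D_{0}}=1$ and everything converges. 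I do not expect any real obstacle beyond this bookkeeping; the same computation, with the formal value $j=\ell$ (and the observation $(\tfrac{q^{\ell}}{1-q^{\ell}})!=(\tfrac{1}{1-q^{\ell}})!$), also recovers Thakur's \eqref{formua 2}.
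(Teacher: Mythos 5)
Your proof is correct and takes essentially the same approach as the paper: the paper's one-sentence sketch is precisely to use the recursion $D_i/D_{i-1}^q=\theta^{q^i}-\theta$ and pass to unit parts, and your argument fills in exactly those details (digit expansions of $q^j/(1-q^\ell)$, telescoping via $\overline{D_n}=(1-\theta^{1-q^n})\overline{D_{n-1}}^q$, matching against the twisted product for $\Omega_\ell$).
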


The first theorem is an analogue (see \cite[Sec. 4.12]{T04}) of the
Chowla-Selberg formula and the second theorem is its quasi-periods
counterpart.  Proofs for both  follow in exactly the same fashion by
straightforward manipulation.  Use
$D_{i}/D_{i-1}^q=(\theta^{q^i}-\theta)$ and take unit parts to
verify that the left side in each formula is the  one-unit part of
the corresponding right side.  Combining  formulas \eqref{formua 2},
\eqref{formula 3} with Corollary \ref{M ell}, we are able to
determine all algebraic relations among those arithmetic gamma
values:

\begin{corollary}\label{standard}
Fix a positive integer $\ell$. Let $L$ be the field over $\bar{k}$
generated by $$ S_{\ell}:= \left\{ (\frac{1}{1-q^{\ell}})!,
(\frac{2}{1-q^{\ell}})!,\ldots,(\frac{q^{\ell}-2}{1-q^{\ell}})!
\right\}.
$$
Then we have
$${\rm{tr.deg}}_{\bar{k}}\hbox{ }L=\ell.  $$
\end{corollary}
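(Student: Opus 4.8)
The plan is to sandwich $\mathrm{tr.deg}_{\bar k} L$ between $\ell$ and $\ell$. Set $v_i := (\frac{q^i}{1-q^\ell})!$ for $0 \le i \le \ell - 1$. These $\ell$ values lie in $S_\ell$ (one checks $q^{\ell-1} \le q^\ell - 2$, which fails only in the degenerate case $q^\ell = 2$), so $\bar k(v_0,\dots,v_{\ell-1}) \subseteq L$; conversely, writing each index in base $q$, formula \eqref{formula 1} expresses every element of $S_\ell$ as a monomial in $v_0,\dots,v_{\ell-1}$, so $L \subseteq \bar k(v_0,\dots,v_{\ell-1})$. Hence $L = \bar k(v_0,\dots,v_{\ell-1})$, and in particular $\mathrm{tr.deg}_{\bar k} L \le \ell$.

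For the reverse inequality I would use the Chowla--Selberg type formulas \eqref{formua 2} and \eqref{formula 3} to show that $L$ contains the field $\bar k\bigl(\Omega_\ell(\theta), \Omega_\ell^{(-1)}(\theta),\dots,\Omega_\ell^{(-(\ell-1))}(\theta)\bigr)$ appearing in Corollary \ref{M ell}. Indeed \eqref{formula 3} gives $\Omega_\ell^{(-(\ell-j))}(\theta) \sim v_j / v_{j-1}^{\,q}$ for $1 \le j \le \ell-1$, while \eqref{formua 2} gives $\Omega_\ell(\theta) \sim v_0 / v_{\ell-1}^{\,q}$; as $j$ runs over $1,\dots,\ell-1$ the index $\ell - j$ runs over $1,\dots,\ell-1$, so these relations together exhibit each $\Omega_\ell^{(-i)}(\theta)$, $0 \le i \le \ell-1$, as a $\bar k$-multiple of a ratio of the $v_j$, hence as an element of $L = \bar k(v_0,\dots,v_{\ell-1})$. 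Corollary \ref{M ell} then yields $\mathrm{tr.deg}_{\bar k} L \ge \mathrm{tr.deg}_{\bar k} \bar k\bigl(\Omega_\ell(\theta),\dots,\Omega_\ell^{(-(\ell-1))}(\theta)\bigr) = \ell$, and combining the two bounds completes the proof.

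The substantive input is already packaged in Corollary \ref{M ell} (hence in Lemma \ref{Galois froup of M ell} and Theorem \ref{tr.deg and dim}), so the only points requiring care here are the two bookkeeping observations above: first, that the monomial relation \eqref{formula 1} forces at most $\ell$ algebraically independent gamma values among those in $S_\ell$; and second, that the particular quasi-period values $\Omega_\ell^{(-(\ell-j))}(\theta)$ occurring in \eqref{formua 2}--\eqref{formula 3} are, up to $\bar k$-multiples, exactly the full list $\Omega_\ell^{(-i)}(\theta)$, $0 \le i \le \ell-1$, whose transcendence degree Corollary \ref{M ell} pins down. I do not expect any genuine obstacle beyond this index matching (and the harmless exclusion of the trivial case $q^\ell = 2$).
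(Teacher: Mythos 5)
Your proposal is correct and follows essentially the same route as the paper: the upper bound $\mathrm{tr.deg}_{\bar k}\,L \le \ell$ from formula \eqref{formula 1}, and the lower bound by using \eqref{formua 2}--\eqref{formula 3} to embed $\bar k\bigl(\Omega_\ell(\theta),\ldots,\Omega_\ell^{(-(\ell-1))}(\theta)\bigr)$ into $L$ and then invoking Corollary \ref{M ell}. The only thing you leave implicit is the degenerate case $q^\ell = 2$ (i.e.\ $q=2$, $\ell=1$), which the paper handles by interpreting $S_1$ as $\{(-1)!\}$, a $k^\times$-multiple of $\tilde{\pi}$.
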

\begin{proof}
Suppose $q>2$ or $\ell>1$.  By \eqref{formula 1} we see that
$$\hbox{tr.deg}_{\bar{k}}\hbox{ }L= \hbox{tr.deg}_{\bar{k}} \hbox{ }
\bar{k}\left( (\dfrac{1}{1-q^{\ell}
})!,(\dfrac{q}{1-q^{\ell}})!,\ldots,(\dfrac{q^{\ell-1}}{1-q^{\ell}
})! \right)\leq \ell .$$
From
\eqref{formua 2} and \eqref{formula 3} we observe that
$$ \bar{k}\left( \Omega_{\ell}(\theta),\Omega_{\ell}^{(-1)}(\theta),\ldots,\Omega_{\ell}^{(-(\ell-1))}(\theta) \right)\subseteq L  $$
so that  Corollary \ref{M ell} gives $\hbox{tr.deg}_{\bar{k}}\hbox{ }L \geq \ell $. Thus,
$\hbox{tr.deg}_{\bar{k}}\hbox{ }L=\ell. $

For $q=2$ and $\ell=1$, we interpret $S_1$ as $\{(-1)!\}$ and thus
the Theorem holds in that case too.
\end{proof}

\begin{remark} \label{brackets}
A uniform framework for arithmetic, geometric and
classical gamma functions is described in \cite[Sec. 7]{T91},
\cite[Sec.  4.12]{T04}. In particular, a `bracket criterion' for the
transcendence of gamma monomials at proper fractions is described.
Our result implies that a set of arithmetic gamma monomials is
$\overline{k}$-linearly dependent exactly when the ratio of some
pair of them satisfies the bracket criterion. (The exact parallel
statement is proved for geometric gamma monomials in \cite{ABP04},
see \cite[Thm. 10.5.3]{T04}). In fact, by the proof of \cite[Thm.
4.6.4]{T04} a given arithmetic gamma monomial satisfies the bracket
criterion precisely when, by integral translations of arguments, it
is expressible as a monomial in $(q^j/(1-q^{\ell}))!$'s (with fixed
$\ell$ and $0\leq j< \ell$ and up to an element of $k$) and the
latter monomial is trivial.  Hence Corollary~\ref{standard} implies
that all algebraic relations over $\overline{k}$ among special
arithmetic gamma values are generated by their bracket relations.
\end{remark}

\section{Algebraic independence of Gamma values and Zeta values}

If $q= 2$, the zeta value $\zeta_C(n)$ for any positive integer $n$
is a rational multiple of $\tilde\pi^n$, with $\tilde\pi
=\tilde\pi_1$ (see Section \ref{subsec zeta}). Thus one can easily
determine all the algebraic relations among these special zeta
values and the arithmetic gamma values put together via Section
\ref{subsec  gamma}. The question which remains is the algebraic
independence of $\zeta_C(n)$ along with arithmetic gamma values for
$q> 2$.  Thus we assume $q>2$ throughout this section.

\subsection{The main theorem}

\subsubsection{Carlitz motives and their tensor powers}
For convenience, we let $C:=M_{1}$ be the $t$-motive associated to
the Carlitz $\FF_{q}[t]$-module and let $\Omega:=\Omega_{1}$ (cf.
Section \ref{subsection for C ell}). For each $n\in \NN$, we
introduce the $n$-th tensor power $C^{\otimes n}$ of the Carlitz
motive $C$. Its underlying space is $\bar{k}(t)$ with $\s$-action $
\s f:= (t-\th)^{n} f^{(-1)} \hbox{ for }f\in C$. Thus $\Omega^{n}$
provides a rigid analytic trivialization for $C^{\otimes n}$.  The Galois group
of  $C^{\otimes n}$ is isomorphic to
$\GG_{m}$ over $\FF_{q}(t)$ because $\Omega$ is transcendental over
$\bar{k}(t)$ (cf. Theorem \ref{Galois theory}).

\subsubsection{Polylogarithms and $L_{n,\a}(t)$}
Recall the Carlitz logarithm ${\rm{log}}_{C}(z)$ of the Carlitz
$\FF_{q}[t]$-module. As a power series, it is the inverse of
${\rm{exp}}_{C_{1}}(z)$ with respect to composition, and it can be
written as
$${\rm{log}}_{C}(z)=z+\sum_{i=1}^{\infty} \frac{z^{q^{i}}}{(\th-\th^{q})(\th-\th^{q^{2}})\cdots (\th-\th^{q^{i}})        }       .$$
For each $n\in \NN$, the $n$-th polylogarithm is defined by
$${\rm{log}}^{[n]}_{C}(z):= z+\sum_{i=1}^{\infty} \frac{z^{q^{i}}}{(\th-\th^{q})^{n}(\th-\th^{q^{2}})^{n}\cdots (\th-\th^{q^{i}})^{n}        }          .$$
It converges on the disc $\{  |z|_{\infty}< |\th|_{\infty}^{
\frac{nq}{q-1}} \}$.

For $n\in \NN$ and $\a\in \bar{k}^{\times}$ with $|\a|_{\infty}<
|\th|_{\infty}^{\frac{nq}{q-1}}$, we consider the power series
$$ L_{\a,n}(t):= \a+\sum_{i=1}^{\infty} \frac{\a^{q^{i}}}{(t-\th^{q})^{n}(t-\th^{q^{2}})^{n}\cdots (t-\th^{q^{i}})^{n}        }     ,  $$
which converges on the disc $\{ |t|_{\infty}< |\th|_{\infty}^{q} \}$
and satisfies $L_{\a,n}(\th)={\rm{log}}^{[n]}_{C}(\a)$.
Moreover, the action of $\s$ on $L_{\a,n}(t)$ gives rise to
$$ L_{\a,n}^{(-1)}(t)=\a^{(-1)}+\frac{L_{\a,n}(t)}{(t-\th)^{n}},$$
and hence we have the functional equation
\begin{equation}\label{f.e.L}
 (\Omega^{n} L_{\a,n})^{(-1)}=\a ^{(-1)}(t-\th)^{n}\Omega^{n}
+\Omega^{n}L_{\a,n}.
\end{equation}

\subsubsection{Review of the Chang-Yu theorem}
Fixing a positive integer $s$, we define
$$
U(s):=\{ n\in \NN;\hbox{ }1\leq n\leq s,\hbox{ }p\nmid n,\hbox{ }
(q-1)\nmid n     \}.
$$
For each $n\in U(s)$ we fix a finite set of $1+m_{n}$ elements
$$\{  \a_{n0},\ldots,\a_{n m_{n}} \} \subseteq \bar{k}^{\times}  $$
so that
\begin{equation}\label{condition 1 for alpha}
 |\a_{nj}|_{\infty} < |\th|_{\infty}^{\frac{n q}{q-1}} \hbox{ for
} j=0,\ldots,m_{n},
\end{equation}
\begin{equation}\label{condition 2 for alpha}
 \tilde{\pi}^{n}, \mathcal{L}_{n0}(\th) ,\cdots,\mathcal{L}_{n
m_{n}}(\th)\hbox{ are linearly independent over }k,
\end{equation}
where $$\mathcal{L}_{nj}:=L_{\a_{j},n } \hbox{ for }
j=0,\ldots,m_{n}.$$ For $n\in U(s)$, define $\Phi_n \in
\hbox{GL}_{m_{n}+2}(\overline{k}(t))\cap
\hbox{Mat}_{m_{n}+2}(\overline{k}[t])$ and $\Psi_n \in
\hbox{GL}_{m_{n}+2}({\TT})$ by
\[
\Phi_{n}=\Phi({\a}_{n0},\ldots,{\a}_{n m_{n}}) :=\left[%
\begin{array}{cccc}
  (t-{\th})^{n} & 0 & \cdots & 0 \\
  {\a}_{n 0}^{(-1)}(t-{\th})^{n} & 1 & \cdots & 0 \\
  \vdots & \vdots & \ddots & \vdots \\
  {\a}_{n m_{n}}^{(-1)}(t-{\th})^{n} & 0 & \cdots & 1 \\
\end{array}%
\right],
\]
\[
\Psi_{n}=\Psi({\a}_{n0},\ldots,{\a}_{nm_{n}}):= \left[%
\begin{array}{cccc}
  \Omega^{n} & 0 & \cdots & 0 \\
  \Omega^{n}\mathcal{L}_{n0} & 1 & \cdots & 0 \\
  \vdots & \vdots & \ddots & \vdots \\
  \Omega^{n}\mathcal{L}_{n m_{n}} & 0 & \cdots & 1 \\
\end{array}%
\right].
\]
Then by \eqref{f.e.L} we have $\Psi_{n}^{(-1)}=\Phi_{n} \Psi_{n}$.
Note that all the entries of $\Psi_{n}$ are inside $\mathbb{E}$ and
that $\Phi_{n}$ defines a $t$-motive which is an extension of
the $(m_{n}+1)$-dimensional trivial $t$-motive over $\bar{k}(t)$ by
$C^{\otimes n}$ (cf. \cite{CY07} Lemma A.1).

Fixing a positive integer $s$, we define the block diagonal
matrices,
$$\Phi_{(s)}:=\oplus_{n\in U(s)} \Phi_{n},$$
$$\Psi_{(s)}:=\oplus_{n\in U(s)} \Psi_{n}.$$
Each $\Phi_{(s)}$ defines a $t$-motive $M_{(s)}$ which is the direct
sum of the $t$-motives defined by $\Phi_{n}$, $n\in U(s)$. Using
(\ref{Gamma Psi}), we see that any element in $\G_{\Psi_{(s)}}$ is
of the form
$$\oplus_{n\in U(s)} \left[%
\begin{array}{cccc}
  x^{n} & 0 & \cdots & 0 \\
  * & 1 & \cdots & 0 \\
  \vdots & \vdots &  \ddots& \vdots \\
  * & 1 & \cdots & 1 \\
\end{array}%
\right],$$ where the block matrix at the position corresponding to
$n\in U(s)$ has size $m_{n}+2$.

Since the Carlitz motive $C$ is a sub-$t$-motive of $ M_{(s)}$, by
Tannakian category theory we have a natural surjective map
\begin{equation}\label{surjection of pi}
\pi : \G_{\Psi_{(s)}} \twoheadrightarrow \GG_{m},
\end{equation}
which coincides with the projection on the upper left corner of any
element of $\G_{\Psi_{(s)}}$ (cf.\ \cite[Sec.~4.3]{CY07}). Let
$V_{(s)}$ be the kernel of $\pi$ so that one has an exact sequence
of linear algebraic groups, $1\rightarrow V_{(s)}\rightarrow
\Gamma_{\Psi_{(s)}}\twoheadrightarrow {\GG}_{m}\rightarrow 1$. From
the projection map $\pi$, we see that $V_{(s)}$ is contained in the
$(\sum_{n\in U(s)} (m_{n}+1 ))$-dimensional vector group $G_{(s)}$
in $\G_{\Psi_{(s)}}$ which consists of all block diagonal matrices
of the form
$$ \oplus _{n\in U(s)} \left[%
\begin{array}{cccc}
  1 & 0 & \cdots & 0 \\
  \ast & 1 & \cdots & 0 \\
  \vdots & \vdots & \ddots & \vdots \\
  \ast & 0& \cdots & 1 \\
\end{array}%
\right],
$$
where the block matrix at the position corresponding to $n\in U(s)$
has size $m_{n}+2$. Here we shall note that $G_{(s)}$ can be
identified with the direct product $\prod_{n\in U(s)}
{\GG}_{a}^{m_{n}+1}$, with the block matrix corresponding to $n\in
U(s)$ identified with points in $\GG_{a}^{m_{n}+1}$ having
coordinates $(x_{n0},\ldots,x_{n m_{n}})$.

\begin{theorem}{\rm{(Chang-Yu \cite{CY07})}}\label{Chang-Yu thm}
Fix any $s\in \NN$, let $\Phi_{(s)}$, $\Psi_{(s)}$, $V_{(s)}$, and
$G_{(s)}$ be defined as above.  Then we have $V_{(s)}=G_{(s)}$, and
hence
$${\rm{dim}}\hbox{ }\G_{\Psi_{(s)}}=1+\sum_{n\in U(s)} (m_{n}+1).
$$ In particular, the
union
$$ \{ \tilde{\pi} \} \cup_{n\in U(s)} \cup_{i=0}^{m_{n}}\{ \mathcal{L}_{ni}(\th)  \}  $$
is an algebraically independent set over $\bar{k}$.
\end{theorem}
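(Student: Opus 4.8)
The plan is to show that the Galois group $\Gamma_{\Psi_{(s)}}$ has dimension exactly $1+\sum_{n\in U(s)}(m_n+1)$ by proving the reverse inclusion $V_{(s)} \supseteq G_{(s)}$; since $V_{(s)} \subseteq G_{(s)}$ is already established, this forces equality, and then Theorem~\ref{tr.deg and dim} together with property (b) of Theorem~\ref{Galois theory} translates the dimension count into the transcendence degree statement. Indeed, once $\dim \Gamma_{\Psi_{(s)}} = 1 + \sum_{n\in U(s)}(m_n+1)$, we have $\mathrm{tr.deg}_{\bar{k}}\ \bar{k}\bigl(\tilde{\pi},\{\mathcal{L}_{ni}(\th)\}\bigr) = \dim \Gamma_{\Psi_{(s)}}$ equal to the cardinality of the listed set (note $\Omega(\th) \sim \tilde{\pi}$), which is precisely algebraic independence.

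To prove $G_{(s)} \subseteq V_{(s)}$, I would argue that $V_{(s)}$, being a $\Gamma_{\Psi_{(s)}}$-submodule of the vector group $G_{(s)} \cong \prod_{n\in U(s)} \GG_a^{m_n+1}$ under the conjugation action of $\GG_m$ (which acts on the block indexed by $n$ through the character $x \mapsto x^n$), must be a direct sum of subspaces of the individual $\GG_a^{m_n+1}$ blocks — here the hypothesis $q > 2$ and the distinctness of the exponents $n$ on distinct blocks is what makes the $\GG_m$-representation multiplicity-free across blocks, so there is no mixing. Thus it suffices to show that for each fixed $n\in U(s)$, the projection of $V_{(s)}$ onto the $n$-th block is all of $\GG_a^{m_n+1}$. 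This is exactly where the linear independence hypothesis \eqref{condition 2 for alpha} enters: if the projection were a proper subgroup, it would be cut out by a nontrivial linear form, which via the torsor description (Theorem~\ref{Galois theory}(c)) and the structure of $\Sigma_{\Psi_{(s)}}$ would produce a nontrivial $\bar{k}$-linear relation among $\Omega^n, \Omega^n\mathcal{L}_{n0},\ldots,\Omega^n\mathcal{L}_{nm_n}$ with coefficients in $\bar{k}(t)$, and after specializing and using the transcendence of $\Omega$, a $k$-linear relation among $\tilde{\pi}^n, \mathcal{L}_{n0}(\th),\ldots,\mathcal{L}_{nm_n}(\th)$, contradicting the hypothesis.

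The main obstacle — and the substance of \cite{CY07} — is the "no mixing between blocks" step, i.e.\ showing that $V_{(s)}$ cannot be the graph of a nontrivial homomorphism coupling different blocks, together with ruling out that $V_{(s)}$ is a proper submodule even within a single block. The first part needs that the characters $x \mapsto x^n$ for the various $n \in U(s)$ are pairwise distinct as characters of $\GG_m$ over $\FF_q(t)$ — which holds since the $n$ are distinct positive integers and we may take $s < \mathrm{char}$-dependent bounds into account, but really it is automatic — so that $\mathrm{Hom}_{\GG_m}(\GG_a^{(n)}, \GG_a^{(n')}) = 0$ for $n \ne n'$; the delicate point is handling the fact that the relevant category is over $\FF_q(t)$, not an algebraically closed field, which is why the condition $(q-1)\nmid n$ appears (ensuring $C^{\otimes n}$ genuinely contributes a nontrivial $\GG_m$-weight and is not "trivial" in a way that would collapse the extension). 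For the within-a-block part, one uses that the extension of the trivial $t$-motive by $C^{\otimes n}$ given by $\Phi_n$ is, by \cite{CY07} Lemma A.1, genuinely $(m_n+1)$-dimensional as an extension class precisely because of \eqref{condition 2 for alpha}.

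Since all of this is the content of the cited theorem of Chang--Yu, I would simply invoke \cite{CY07} for the equality $V_{(s)} = G_{(s)}$ and then record the two short consequences: the dimension formula $\dim \Gamma_{\Psi_{(s)}} = \dim V_{(s)} + \dim \GG_m = 1 + \sum_{n\in U(s)}(m_n+1)$ from the exact sequence $1 \to V_{(s)} \to \Gamma_{\Psi_{(s)}} \to \GG_m \to 1$, and then the algebraic independence conclusion by applying Theorem~\ref{tr.deg and dim} to the $t$-motive $M_{(s)}$ (whose defining matrix $\Phi_{(s)}$ has determinant a power of $(t-\th)$ up to a constant, and whose rigid analytic trivialization $\Psi_{(s)}$ has entries in $\mathbb{E}$), giving $\mathrm{tr.deg}_{\bar{k}}\ \bar{k}(\Psi_{(s)}(\th)) = \dim \Gamma_{\Psi_{(s)}}$; since $\bar{k}(\Psi_{(s)}(\th)) = \bar{k}\bigl(\Omega(\th), \{\Omega(\th)^n \mathcal{L}_{ni}(\th)\}\bigr) = \bar{k}\bigl(\tilde{\pi}, \{\mathcal{L}_{ni}(\th)\}\bigr)$ and the number of generators matches the transcendence degree, the set $\{\tilde{\pi}\} \cup \bigcup_{n\in U(s)}\bigcup_{i=0}^{m_n}\{\mathcal{L}_{ni}(\th)\}$ is algebraically independent over $\bar{k}$.
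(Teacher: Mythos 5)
Your proposal is correct and matches the paper's treatment: the paper states this theorem as a citation to \cite{CY07} without proof, and your plan to invoke \cite{CY07} for $V_{(s)}=G_{(s)}$ and then read off the dimension from the exact sequence $1\to V_{(s)}\to\Gamma_{\Psi_{(s)}}\to\GG_m\to1$ and the algebraic independence from Theorem~\ref{tr.deg and dim} (noting $1\in U(s)$ since $q>2$, so $\Omega(\th)\sim\tilde\pi$ is among the period entries) is exactly the intended derivation. (Your parenthetical sketch of the internals of \cite{CY07} has a couple of heuristic imprecisions --- the weight $n$ of $\GG_m$ is nontrivial for any $n\ge1$ regardless of $(q-1)\nmid n$, and $q>2$ is not what makes the blocks multiplicity-free --- but since you ultimately just cite \cite{CY07}, these asides do not affect the argument.)
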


\subsubsection{The main theorem}
Given positive integers $\ell$ and $s$, we consider the $t$-motive
$M:=M_{(s)}\oplus M_{\ell} $, where $M_{\ell}$ is the $t$-motive
defined by $\Phi_{\ell}$ with rigid analytic trivialization
$\Psi_{\ell}$ (see Section \ref{subsection for C ell}). More
precisely, $M$ is defined by $\Phi:=\Phi_{(s)}\oplus \Phi_{\ell}$
with a rigid analytic trivialization $\Psi:=\Psi_{(s)}\oplus
\Psi_{\ell}$. The main theorem of this subsection can be stated as
follows.

\begin{theorem}\label{main thm}
Given any two positive integers $s$ and $\ell$, let $(M,\Phi,\Psi)$
be defined as above. Then the dimension of the Galois group
$\G_{\Psi}$ of $M$ is
$$\ell+ \sum_{n\in U(s)}\{m_{n}+1 \}.$$
In particular, the following set
$$ \cup_{n\in U(s)}\cup_{j=0}^{m_{n}}  \{ \mathcal{L}_{nj}(\th) \} \cup
\{ (\frac{1}{1-q^{\ell}})!,
(\frac{q}{1-q^{\ell}})!,\ldots,(\frac{q^{\ell-1}}{1-q^{\ell}})! \}$$
is a transcendence basis of $\bar{k}(\Psi(\th))$ over $\bar{k}$.
\end{theorem}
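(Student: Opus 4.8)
The plan is to combine the two Galois-group computations we already have---Lemma~\ref{Galois froup of M ell}, giving that $\G_{M_\ell}$ is an $\ell$-dimensional torus, and Theorem~\ref{Chang-Yu thm}, giving that $\G_{\Psi_{(s)}}$ is an extension of $\GG_m$ by the vector group $G_{(s)}$ of dimension $\sum_{n\in U(s)}(m_n+1)$---and to show that the Galois group of the direct sum $M=M_{(s)}\oplus M_\ell$ is, up to isogeny, as large as it could possibly be, namely that its dimension is the sum $\ell+\sum_{n\in U(s)}(m_n+1)$. First I would set up the two natural projections $\pi_{(s)}\colon \G_\Psi \twoheadrightarrow \G_{\Psi_{(s)}}$ and $\pi_\ell\colon \G_\Psi\twoheadrightarrow \G_{M_\ell}$ arising from $M_{(s)}$ and $M_\ell$ being sub-$t$-motives of $M$ (Tannakian functoriality, as with \eqref{surjection of pi}); since $\Psi=\Psi_{(s)}\oplus\Psi_\ell$ is block diagonal, $\G_\Psi$ embeds into $\G_{\Psi_{(s)}}\times\G_{M_\ell}$ as a closed subgroup projecting onto each factor. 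By Theorem~\ref{Galois theory}(b) (or Theorem~\ref{tr.deg and dim}) the dimension of $\G_\Psi$ equals $\mathrm{tr.deg}_{\bar k(t)}\,\bar k(t)(\Psi)$, which is the transcendence degree of the compositum of the two function fields $\bar k(t)(\Psi_{(s)})$ and $\bar k(t)(\Psi_\ell)$ over $\bar k(t)$; so it suffices to show these two fields are algebraically independent over $\bar k(t)$, i.e.\ that the fibre product $\G_\Psi\subseteq \G_{\Psi_{(s)}}\times\G_{M_\ell}$ is the full product.

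The standard tool for this is Goursat's lemma: a subgroup of $G_1\times G_2$ projecting onto each factor is the whole product unless $G_1$ and $G_2$ have a common nontrivial quotient through which the two projections become identified. So the heart of the argument is to rule out any common quotient of $\G_{\Psi_{(s)}}$ and $\G_{M_\ell}$. Here the CM structure does the work: $\G_{M_\ell}$ is a torus (in fact, after base change to $\FF_{q^\ell}(t)$, the split torus $\G_{\widetilde{\Psi_\ell}}$ cut out by the $\Omega_\ell^{(-i)}(\theta)$), so any common quotient would have to be a nontrivial torus quotient of $\G_{\Psi_{(s)}}$; but by Theorem~\ref{Chang-Yu thm} the maximal torus quotient of $\G_{\Psi_{(s)}}$ is exactly the $\GG_m$ of \eqref{surjection of pi}, acting through $\Omega^n=\Omega_1^n$ on the tensor powers $C^{\otimes n}$. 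Thus a common quotient would force an algebraic dependence relation, over $\bar k(t)$, between a monomial in the $\Omega_1^{(-i)}(\theta)$'s---really just a power of $\Omega=\Omega_1$, since $\G_{\Psi_{(s)}}$ only sees the untwisted $\Omega$---and a monomial in the $\Omega_\ell^{(-i)}(\theta)$'s, $0\le i\le \ell-1$. I would kill such a relation by the same order-of-vanishing argument used in the proof of Lemma~\ref{Galois froup of M ell}: comparing orders of zeros/poles at $t=\theta^{q^{\ell j-i}}$ for large $j$, using that the zero sets $\{\theta^{q^{j}}\}$ of $\Omega_1$ and $\{\theta^{q^{\ell j-i}}\}$ of $\Omega_\ell^{(-i)}$ are disjoint for distinct $\ell$'s (and for $\ell=1$ one checks the $q$-orbit $\{\theta^{q^j}\}$ against the sub-orbits directly), so no such $\bar k(t)$-relation can hold. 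This gives that the fibre product is the full product and hence $\dim\G_\Psi=\dim\G_{\Psi_{(s)}}+\dim\G_{M_\ell}=\bigl(1+\sum_{n\in U(s)}(m_n+1)\bigr)+\ell$; wait---note the $1$ from $\GG_m$ is already counted inside $\dim\G_{M_\ell}$ when $\ell\ge 1$ only if there is overlap, which is exactly what we have just excluded, so in fact the count is $\ell+\sum_{n\in U(s)}(m_n+1)$, the "$+1$" of Chang--Yu being absorbed by reindexing through $\ell\ge 1$: more carefully, one subtracts $1$ for the shared $\GG_m$ direction only when $M_\ell$ also contributes that direction, and since $M_\ell$'s torus contains the Carlitz $\GG_m$, the correct total after Goursat with the shared $\GG_m$ is $\ell+\sum_{n\in U(s)}(m_n+1)$.

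To translate the dimension statement into the transcendence-basis statement, I would invoke Theorem~\ref{tr.deg and dim}: since $\Phi=\Phi_{(s)}\oplus\Phi_\ell$ has $\det\Phi$ a scalar times a power of $(t-\theta)$ and $\Psi\in\mathrm{GL}_r(\LL)\cap\mathrm{Mat}_r(\EE)$, we get $\mathrm{tr.deg}_{\bar k}\,\bar k(\Psi(\theta))=\dim\G_\Psi=\ell+\sum_{n\in U(s)}(m_n+1)$. The entries of $\Psi(\theta)$ include the $\mathcal{L}_{nj}(\theta)$ (times the transcendental $\Omega^n(\theta)$, i.e.\ a power of $\tilde\pi$), the $\Omega_\ell^{(-i)}(\theta)$ for $0\le i\le\ell-1$, and $\tilde\pi$ itself; by the Chowla--Selberg formulas \eqref{formua 2} and \eqref{formula 3} together with \eqref{formula 1}, the field $\bar k(\Psi(\theta))$ equals the field generated over $\bar k$ by $\{\mathcal{L}_{nj}(\theta)\}$ and $\{(\tfrac{q^j}{1-q^\ell})! : 0\le j\le \ell-1\}$ and $\tilde\pi$, and $\tilde\pi=\tilde\pi_1\sim (-1)!$ is already a $\bar k$-multiple of one of the gamma values (indeed $\tilde\pi_\ell$ and the gamma values are all related via \eqref{def of tilde pi}, \eqref{formua 2}). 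Hence the displayed set of cardinality $\ell+\sum_{n\in U(s)}(m_n+1)$ generates a field of transcendence degree equal to its cardinality, so it is a transcendence basis. The main obstacle, as indicated, is the Goursat step---precisely, verifying that $\G_{\Psi_{(s)}}$ has no torus quotient beyond the Carlitz $\GG_m$ and that this single shared $\GG_m$ is the only "overlap" with $\G_{M_\ell}$---and then bookkeeping the shared $\GG_m$ correctly so that the final dimension is $\ell+\sum_{n\in U(s)}(m_n+1)$ and not one more; everything else is a routine application of the results already in hand.
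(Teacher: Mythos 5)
Your Goursat framework is a reasonable alternative to the paper's exact-sequence-plus-commutator approach, but the central step as written is false, and the ``wait'' aside shows you sensed this without resolving it. You assert that no $\overline{k}(t)$-relation can hold between a power of $\Omega=\Omega_1$ and a monomial in the $\Omega_\ell^{(-i)}$, $0\le i\le \ell-1$, and you justify this by ``disjointness'' of zero sets. But in fact
\[
  \Bigl(\prod_{i=0}^{\ell-1}\Omega_\ell^{(-i)}\Bigr)^{(-1)}
  = \Omega_\ell^{(-\ell)}\prod_{i=1}^{\ell-1}\Omega_\ell^{(-i)}
  = (t-\theta)\prod_{i=0}^{\ell-1}\Omega_\ell^{(-i)},
\]
so $\prod_{i=0}^{\ell-1}\Omega_\ell^{(-i)}$ satisfies the \emph{same} $\sigma$-difference equation as $\Omega$; their ratio is $\sigma$-invariant in $\mathbb{L}$, hence lies in $\FF_q(t)$. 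Concretely the union of the zero sets of $\Omega_\ell^{(-i)}$ over $0\le i\le \ell-1$ is exactly $\{\theta^{q^j}\}_{j\ge1}$, the zero set of $\Omega$ --- they are not disjoint, they coincide. So the fibre product $\Gamma_\Psi\subseteq\Gamma_{\Psi_{(s)}}\times\Gamma_{M_\ell}$ is emphatically \emph{not} the full product; there is a common $\GG_m$-quotient (the Carlitz $\GG_m$, realized as $\det$ of $M_\ell$ on one side and the top-left corner on the other), and your order-of-vanishing argument cannot detect it. Your parenthetical back-and-forth (``which is exactly what we have just excluded, so in fact the count is \dots'') first asserts no overlap and then asserts overlap of dimension one, without an argument that isolates the overlap as exactly $\GG_m$.

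The paper sidesteps this cleanly: it sets $M_D:=\oplus_{n\in U(s)}C^{\otimes n}\oplus M_\ell$ with $\Psi_D=\oplus[\Omega^n]\oplus\Psi_\ell$, notes that $\det\Psi_\ell(\theta)\sim\Omega(\theta)$ forces $\overline{k}(\Psi_D(\theta))=\overline{k}(\Psi_\ell(\theta))$, and hence $\dim\Gamma_{\Psi_D}=\dim\Gamma_{\Psi_\ell}=\ell$ by Theorem~\ref{Galois theory} --- i.e.\ the ``extra'' $\GG_m$ from the zeta side is already absorbed into the torus $\Gamma_{M_\ell}$. This converts your implicit overlap-bookkeeping into an honest computation. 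The remaining work is to show $\dim V=\sum_{n\in U(s)}(m_n+1)$ for $V=\ker(\Gamma_\Psi\twoheadrightarrow\Gamma_{\Psi_D})$, which the paper does via the commutative diagram~\eqref{diagram} and the commutator trick: lift a coordinate vector $v\in V_{(s)}$ to $\gamma\in\Gamma_\Psi$, pick $\delta$ with $\chi_D\circ\pi_D(\delta)=a\notin\overline{\FF_q}^\times$, and use commutativity of $\Gamma_{\Psi_\ell}$ to see $\delta^{-1}\gamma\delta\gamma^{-1}$ lies in $V$ and hits the correct coordinate line nonzero. To repair your Goursat route you would have to (i) prove the maximal torus quotient of $\Gamma_{\Psi_{(s)}}$ is the Carlitz $\GG_m$ (this is Theorem~\ref{Chang-Yu thm}), (ii) prove that the torus $\Gamma_{M_\ell}$, after passing to the common quotient, identifies with that same $\GG_m$ and nothing larger, and (iii) rule out any further common quotient involving the unipotent part (automatic since $\Gamma_{M_\ell}$ is a torus). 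Steps (i) and (iii) are fine; step (ii) is precisely where your disjoint-zero-set argument would need to be replaced by the $\det\Psi_\ell\sim\Omega$ identity, and as written your argument points in the opposite direction. The transcendence-basis paragraph at the end is essentially correct granted the dimension count, since $\tilde\pi\in\overline{k}(\Psi_\ell(\theta))$ via that same identity.
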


\begin{proof}
First we note that by (\ref{Gamma Psi}) any element of
$\G_{\Psi}(\overline{\FF_{q}(t)})$ is of the form
$$ \oplus_{n\in U(s)}  \left[%
\begin{array}{cccc}
  x^{n} & 0 & \cdots & 0 \\
  \ast & 1 & \cdots & 0 \\
  \vdots & \vdots & \ddots & \vdots \\
  \ast & 0& \cdots & 1 \\
\end{array}%
\right]\oplus B , $$ where
$$ \oplus_{n\in U(s)}  \left[%
\begin{array}{cccc}
  x^{n} & 0 & \cdots & 0 \\
  \ast & 1 & \cdots & 0 \\
  \vdots & \vdots & \ddots & \vdots \\
  \ast & 0& \cdots & 1 \\
\end{array}%
\right]\in \G_{\Psi_{(s)}} (\overline{\FF_{q}(t)}),\hbox{ } B\in
\G_{\Psi_{\ell}}(\overline{\FF_{q}(t)}).
$$
Define $\Phi_{D}:=\oplus_{n\in U(s)}[( t-\th )^{n} ]\oplus
\Phi_{\ell}$, $\Psi_{D}:=\oplus_{n\in U(s)} [\Omega^{n}]\oplus
\Psi_{\ell}$. Then we have $\Psi_{D}^{(-1)}=\Phi_{D}\Psi_{D}$, and
we note that such $\Phi_{D}$ defines a $t$-motive $M_{D}$, which is
the direct sum of the $t$-motives $\oplus_{n\in U(s)} C^{\otimes n}
\oplus M_{\ell}$.  Moreover, the $t$-motive $M_{D}$ is a
sub-$t$-motive of $M$, and hence using the same argument as for the
surjection of $\pi$ \eqref{surjection of pi} we have a surjective
map
$$\pi_{D}:\G_{\Psi } \twoheadrightarrow \G_{\Psi_{D}},$$
which coincides with the projection map given by
$$\oplus_{n\in U(s)}  \left[%
\begin{array}{cccc}
  x^{n} & 0 & \cdots & 0 \\
  \ast & 1 & \cdots & 0 \\
  \vdots & \vdots & \ddots & \vdots \\
  \ast & 0& \cdots & 1 \\
\end{array}%
\right]\oplus B \mapsto \oplus_{n\in U(s)}[x^{n}]\oplus B.$$ Put
$V:=\hbox{Ker }\pi_{D}$ and note that $V$ is a vector group.

Since ${\rm{det}}\hbox{ }\Psi_{\ell}(\th)\sim \Omega(\th)$, we have
$\bar{k}(\Psi_{D}(\th))=\bar{k}(\Psi_{\ell}(\th))$, and hence by
Theorem \ref{Galois theory} and Lemma \ref{Galois froup of M ell} we
see that $ \hbox{dim }\Gamma_{\Psi_{D}}=\hbox{dim
}\Gamma_{\Psi_{\ell}}= \ell$.  Therefore, to prove this theorem it
is equivalent to prove that $\hbox{dim }V=\sum_{n\in
U(s)}(m_{n}+1)$.

Consider the following commutative diagram:
\begin{equation}\label{diagram}
\xymatrix{
1\ar[r] & V \ar[r] \ar@{->}[d] & \Gamma_{\Psi} \ar@{->>}[r]^{\pi_{D}} \ar@{->>}[d]^{\pi_{s}} &\G_{\Psi_{D}} \ar[r] \ar@{->>}[d]^{\chi_{D}} & 1\\
1\ar[r]& V_{(s)} \ar[r]& \Gamma_{\Psi_{(s)}} \ar@{->>}[r]^{\pi} &
{\GG}_{m} \ar[r]& 1, }
\end{equation}
where the right hand square is given
by
\[
\xymatrix{ {\displaystyle \oplus_{n\in U(s)}\left[
\begin{matrix}
x^{n} & 0 & \cdots & 0 \\
* & 1 & \cdots & 0 \\
\vdots & \vdots & \ddots & \vdots \\
* & 0 & \cdots & 1
\end{matrix}
\right]\oplus B} \ar@{|->}[d]_{\pi_s}
\ar@{|->}[r]^{\hspace*{30pt}\pi_D}
& {\oplus_{n\in U(s)}[x^n]\oplus B} \ar@{|->}[d]_{\chi_D} \\
{\oplus_{n\in U(s)}\left[
\begin{matrix}
x^{n} & 0 & \cdots & 0 \\
* & 1 & \cdots & 0 \\
\vdots & \vdots & \ddots & \vdots \\
* & 0 & \cdots & 1
\end{matrix}
\right]} \ar@{|->}[r]_{\hspace*{30pt}\pi} & [x]. }
\]
Here we note that the projection maps $\pi_{s}$ and $\pi_{D}$ are
surjective since $M_{(s)}$ is a sub-$t$-motive of $M$ and the
Carlitz motive $C$ is a sub-$t$-motive of $M_{D}$ (cf.
(\ref{surjection of pi})).

By Theorem \ref{Chang-Yu thm} we have $V_{(s)}=G_{(s)}$, which is
identified with the product space $ \prod_{n\in U(s)}
\GG_{a}^{m_{n}+1}$ canonically. For a double index $nj$ with $n\in
U(s)$, $0\leq j \leq m_{n}$, we let the $(nj)$-coordinate space in
$V_{(s)}$ be the one-dimensional vector subgroup consisting of
points whose coordinates all vanish except the coordinate $x_{nj}$.
Hence, we need only show that $\pi_{s}|_{V}$ is surjective onto each
$(nj)$-coordinate space in $V_{(s)}$ for $n\in U(s)$, $0\leq j\leq
m_{n}$.

Given a nonzero element $v$ of the $(nj)$-coordinate space in
$V_{(s)}(\overline{\FF_{q}(t)})$, we can pick $\gamma\in
\G_{\Psi}(\overline{\FF_{q}(t)})$ so that $\pi_{s}(\gamma)=v$ since
$\pi_{s}$ is surjective. Further, pick $a\in
\overline{\FF_{q}(t)}^{\times}\setminus \overline{\FF_{q}}^{\times}$
and let $\delta\in \Gamma_{\Psi}(\overline{\FF_{q}(t)})$ for which
$$\chi_{D}\circ \pi_{D}(\delta)=a . $$
Using the property that $\Gamma_{\Psi_{\ell}}$ is commutative
(cf. Lemma \ref{Galois froup of M ell}), direct calculation shows
that
$$\delta ^{-1} \gamma \delta \gamma^{-1} \in V(\overline{\FF_{q}(t)})  $$
and $ \pi_{s}( \delta ^{-1} \gamma \delta \gamma^{-1})$ belongs to
the $(nj)$-coordinate space in $V_{(s)}(\overline{\FF_{q}(t)} )$.
Moreover, for such choice of $a$, we see that $ \pi_{s}(\delta ^{-1}
\gamma \delta \gamma^{-1} ) $ is nonzero and complete the
proof.
\end{proof}

\subsection{Application to zeta values}\label{subsec zeta}
The key for applying Section 4.1 to our problems on zeta values is the following fact concerning the special zeta value
$\zeta_{C}(n)$ and $n$-th polylogarithms.

\begin{theorem}
{\rm{(Anderson-Thakur \cite{AT90})}} \label{Anderson-Thakur} Given a
positive integer $n$, one can explicitly find a finite sequence
$h_{n,0},\ldots,h_{n,l_{n}}\in k$, $l_{n}<\frac{nq}{q-1}$, such that
\begin{equation}\label{gamma-zeta formula}
\zeta_{C}(n)=\sum_{i=0}^{l_{n}}h_{n,i}L_{{\th}^{i},n}({\th}).
\end{equation}
\end{theorem}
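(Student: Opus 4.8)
The plan is to realize $\zeta_C(n)$ as a $k$-linear combination of the special polylogarithm values $\mathrm{log}_C^{[n]}(\theta^i) = L_{\theta^i,n}(\theta)$ by means of the Anderson--Thakur calculus of the $n$-th tensor power $C^{\otimes n}$ of the Carlitz module. The starting point is the expansion of the zeta value into power sums,
$$
\zeta_C(n) = \sum_{d=0}^{\infty} S_d(n), \qquad S_d(n) := \sum_{\substack{a\in A_+\\ \deg a = d}} \frac{1}{a^n},
$$
together with Carlitz's closed-form evaluation of each $S_d(n)$ in terms of the quantities $D_i$ and $L_i := \prod_{j=1}^{i}(\theta-\theta^{q^j})$ occurring in the Carlitz exponential and logarithm.

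First I would recall the Anderson--Thakur polynomial attached to $\zeta_C(n)$, call it $H_{n-1}(t)\in A[t]$, defined through an explicit generating-series identity built from the $t$-deformed Carlitz exponential $\sum_{i\ge 0}\bigl(\prod_{\ell=1}^{i}(t-\theta^{q^\ell})\bigr)D_i^{-1}x^{q^i}$; the crucial feature is the degree estimate $\deg_t H_{n-1} < \tfrac{nq}{q-1}$. Writing $H_{n-1}(t) = \sum_{i=0}^{l_n} c_{n,i}\,t^i$ with $c_{n,i}\in A$ and $l_n < \tfrac{nq}{q-1}$, the heart of the proof is the combinatorial identity
$$
\Gamma_n\,\zeta_C(n) = \sum_{i=0}^{l_n} c_{n,i}\,\mathrm{log}_C^{[n]}(\theta^i),
$$
where $\Gamma_n\in A$ denotes the Carlitz factorial of $n-1$. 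I would establish it by matching, degree by degree in the $q$-power filtration, the contribution of $S_d(n)$ on the left against the degree-$d$ term $c_{n,i}\,\theta^{iq^d}L_d^{-n}$ of $\mathrm{log}_C^{[n]}(\theta^i)$ on the right --- equivalently, by feeding the generating series of the $H_j$ into the exponential $\exp_{C^{\otimes n}}$ of the tensor power and using that its last coordinate, restricted to the relevant coordinate line, is precisely the scalar polylogarithm. Dividing through by $\Gamma_n$ places the coefficients $h_{n,i} := c_{n,i}/\Gamma_n$ in $k$ and gives \eqref{gamma-zeta formula}. Finally, the bound $l_n < \tfrac{nq}{q-1}$ is exactly what forces $|\theta^i|_\infty < |\theta|_\infty^{nq/(q-1)}$ for $0\le i\le l_n$, so that each $L_{\theta^i,n}$ lies in the admissible range where the defining series converges and $L_{\theta^i,n}(\theta) = \mathrm{log}_C^{[n]}(\theta^i)$.

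The main obstacle is the combinatorial identity itself: one must produce the polynomials $H_{n-1}$ with the sharp degree bound and check that they convert the divergent-looking sum $\sum_d S_d(n)$ into the finite polylogarithmic combination above. This is where the algebraicity input enters --- in Anderson--Thakur's language, the algebraic point $\exp_{C^{\otimes n}}(\mathbf v)$ of $C^{\otimes n}$, where $\mathbf v$ is the Lie-algebra element whose last coordinate equals $\Gamma_n\zeta_C(n)$, is shown to be $A$-rational, so that $\mathbf v = \log_{C^{\otimes n}}\bigl(\exp_{C^{\otimes n}}(\mathbf v)\bigr)$ and the coordinates of $\mathbf v$ can be read off directly. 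Everything downstream --- the passage to scalar polylogarithms, the identification $L_{\theta^i,n}(\theta) = \mathrm{log}_C^{[n]}(\theta^i)$ noted already, and the clearing of $\Gamma_n$ --- is then purely formal.
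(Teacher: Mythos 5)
The paper cites this result verbatim from Anderson--Thakur \cite{AT90} and gives no proof, so there is no in-paper argument to compare against; your sketch is in effect a reconstruction of the original Anderson--Thakur proof, and its outline is sound. The key inputs are all present: the Anderson--Thakur polynomials $H_{n-1}(t)\in A[t]$ with the degree bound $\deg_t H_{n-1}<nq/(q-1)$, the $A$-rationality of a special algebraic point of the tensor power $C^{\otimes n}$ whose inverse-logarithm has last coordinate $\Gamma_n\zeta_C(n)$, the identification of the last coordinate of $\log_{C^{\otimes n}}$ applied to an algebraic point with the scalar polylogarithm, and the division by the Carlitz factorial $\Gamma_n$ to produce $h_{n,i}\in k$. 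You also rightly observe that the bound $l_n<nq/(q-1)$ is exactly what places every $\theta^i$, $0\le i\le l_n$, inside the disc of convergence of $\log_C^{[n]}$, so each $L_{\theta^i,n}(\theta)=\log_C^{[n]}(\theta^i)$ is a legitimate evaluation; this is indeed why the theorem is stated with that particular bound.

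Two details to tighten. The phrase ``Carlitz's closed-form evaluation of each $S_d(n)$'' overstates what is available: such a closed form exists only in low weight, and its absence for general $n$ is precisely why one must route the argument through $C^{\otimes n}$ and the polynomials $H_{n-1}$ rather than summing power sums directly. Also, the generating series you write for $H_{n-1}$ is not quite the Anderson--Thakur one---their deformed exponential carries a Frobenius twist on $t$ inside the $q^i$-th coefficient, not the bare $t$ appearing in your expression, and the sharp degree estimate on $H_{n-1}$ hinges on getting this exactly right. Neither point invalidates the sketch; you have correctly flagged the $A$-rationality of the special point $\exp_{C^{\otimes n}}(\mathbf{v})$ as the one genuinely hard step, which is the right diagnosis.
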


Given any positive integer $n$ not divisible by $q-1$, set
$$ N_{n}:=k\hbox{-span}\{\tilde{\pi}^{n},L_{1,n}({\th}),L_{{\th},n}({\th}),\ldots,
L_{{\th}^{l_{n}},n}({\th}) \}. $$By (\ref{gamma-zeta formula}) we
have $\zeta_{C}(n)\in N_{n}$ and $m_{n}+2:=\hbox{dim}_{k}\hbox{
}N_{n}\geq 2$ since $\zeta_{C}(n)$ and $\tilde{\pi}^{n}$ are
linearly independent over $k$. For each such $n$ we fix once and for
all a finite subset
$$\{ \a_{n0},\ldots,\a_{nm_{n}}\}\subseteq  \{1,\th,\ldots,{\th}^{l_{n}} \}  $$
such that both
$$\{ \tilde{\pi}^{n},\mathcal{L}_{n0}(\th),\ldots,
\mathcal{L}_{nm_{n}}({\th}) \}$$ and $$\{
\tilde{\pi}^{n},\zeta_{C}(n),\mathcal{L}_{n1}({\th}),\ldots,
\mathcal{L}_{nm_{n}}({\th}) \}$$ are bases of $N_{n}$ over $k$,
where $\mathcal{L}_{nj}(t):=L_{{\a}_{nj},n }(t)$ for
$j=0,\ldots,m_{n}$. This can be done because of Theorem
\ref{Anderson-Thakur}.

Given a positive integer $s$, Theorem \ref{Chang-Yu thm} then
implies that all the zeta values $\zeta_C(n)$,  $n\in U(s)$, and
$\tilde\pi$, are algebraically independent over $k$. In other words,
we have
\begin{equation} \label{CYtrdeg}
\hbox{tr.deg}_{\bar{k}}\ \bar{k}(\tilde{\pi},
\zeta_{C}(1),\ldots,\zeta_{C}(s) )=  s-\lfloor s/p \rfloor -\lfloor
s/(q-1)  \rfloor +\lfloor s/p(q-1) \rfloor +1.
\end{equation}
This also implies that all $\bar{k}$-algebraic relations among
Carlitz zeta values are those relating the $\zeta_C(m)$, with
$m\notin U(s)$, and those $\zeta_{C}(n)$ with $n\in U(s)$ for given
$s\in \mathbb{N}$.

These relations come from the Frobenius $p$-th power relations and
the Euler-Carlitz relations, which we recall briefly.  The Frobenius
$p$-th power relations among zeta values ($p$ is the characteristic)
are
$$ \zeta_{C}(p^{m}n)=\zeta_{C}(n)^{p^{m}} \hbox{ for }m,n\in \mathbb{N}.$$
Also the Euler-Carlitz relations among the $\zeta_C(n)$, for $n$
divisible by $q-1$, and $\tilde{\pi}$ are
$$\zeta_{C}(n)=\frac{B_{n}}{\G_{n+1}} \tilde{\pi}^{n}.$$
The Bernoulli-Carlitz `numbers' $B_{n}$ in $k$ are given by the
following expansion from the Carlitz exponential series,
\[
\frac{z}{\hbox{exp}_{C_{1}}(z)}=\sum_{n=0}^{\infty}
\frac{B_{n}}{\G_{n+1}}z^{n},
\]
where $\G_{n+1}$ is the Carlitz factorial of $n$  (cf.\ Section 1).
Thus the formula in \eqref{CYtrdeg} is obtained by
inclusion-exclusion.

Now, applying Theorem \ref{main thm} to this setting we can
determine the transcendence degree of the field generated by all
arithmetic gamma values and  zeta values put together.

\begin{theorem}\label{main result}
Given any two positive integers $s$ and $\ell$, let $E$ be the field
over $\bar{k}$ generated by the set
$$\left\{ \tilde{\pi},\zeta_{C}(1),\ldots,\zeta_{C}(s) \right\}
\bigcup \left\{ (\frac{c}{1-q^{\ell} })!;\hbox{ }1\leq c \leq
q^{\ell}-2
  \right\}.
$$Then the transcendence degree of $E$ over $\bar{k}$ is
$$ s-\lfloor s/p \rfloor -\lfloor s/(q-1)  \rfloor +\lfloor s/p(q-1) \rfloor + \ell.            $$
\end{theorem}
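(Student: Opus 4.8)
The plan is to deduce Theorem~\ref{main result} from Theorem~\ref{main thm} by bookkeeping two things: first, translating the transcendence-degree computation for $\bar{k}(\Psi(\th))$ into a statement about the explicit field $E$; and second, applying an inclusion-exclusion count to the contribution of the zeta values, exactly as in~\eqref{CYtrdeg}. First I would observe that, by~\eqref{formula 1} and Corollary~\ref{standard}, the field generated over $\bar{k}$ by $\{(\frac{c}{1-q^\ell})!;\ 1\leq c\leq q^\ell-2\}$ has transcendence degree $\ell$ and in fact coincides (up to algebraic elements) with $\bar{k}\left((\frac{1}{1-q^\ell})!,(\frac{q}{1-q^\ell})!,\ldots,(\frac{q^{\ell-1}}{1-q^\ell})!\right)$. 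Next, for the zeta part, I would choose $s$ and, for each $n\in U(s)$, pick the sets $\{\a_{n0},\ldots,\a_{nm_n}\}\subseteq\{1,\th,\ldots,\th^{l_n}\}$ as arranged in Section~\ref{subsec zeta} so that both $\{\tilde\pi^n,\mathcal{L}_{n0}(\th),\ldots,\mathcal{L}_{nm_n}(\th)\}$ and $\{\tilde\pi^n,\zeta_C(n),\mathcal{L}_{n1}(\th),\ldots,\mathcal{L}_{nm_n}(\th)\}$ are $k$-bases of $N_n$. This is precisely the configuration to which Theorem~\ref{main thm} applies, with the same $s$ and $\ell$.

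With these choices, Theorem~\ref{main thm} gives $\mathrm{tr.deg}_{\bar{k}}\ \bar{k}(\Psi(\th)) = \ell + \sum_{n\in U(s)}(m_n+1)$, and identifies
$$
\bigcup_{n\in U(s)}\bigcup_{j=0}^{m_n}\{\mathcal{L}_{nj}(\th)\}\ \cup\ \left\{(\tfrac{1}{1-q^\ell})!,(\tfrac{q}{1-q^\ell})!,\ldots,(\tfrac{q^{\ell-1}}{1-q^\ell})!\right\}
$$
as a transcendence basis. I would then argue that the field generated over $\bar{k}$ by $\{\tilde\pi^n,\zeta_C(n),\mathcal{L}_{n1}(\th),\ldots,\mathcal{L}_{nm_n}(\th)\}_{n\in U(s)}$ together with the $\ell$ gamma values equals $\bar{k}(\Psi(\th))$: the inclusion ``$\subseteq$'' follows since $\tilde\pi\sim 1/\Omega(\th)$ and each $\zeta_C(n)$, by Theorem~\ref{Anderson-Thakur}, is a $k$-linear combination of the $L_{\th^i,n}(\th)$ hence expressible via the $\mathcal{L}_{nj}(\th)$ and $\tilde\pi^n$ (using that the second basis spans $N_n$); the reverse inclusion is the same linear algebra run backwards, using that $\zeta_C(n)$ and $\tilde\pi^n$ together with $\mathcal{L}_{n1},\ldots,\mathcal{L}_{nm_n}$ recover $\mathcal{L}_{n0}(\th)$. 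Hence $E$ (which is generated by $\tilde\pi$, all $\zeta_C(1),\ldots,\zeta_C(s)$, and all the gamma values) has the same transcendence degree over $\bar{k}$, once one checks that the zeta values $\zeta_C(m)$ for $m\notin U(s)$ and the remaining gamma values add nothing new: the former by the Frobenius $p$-th power relations $\zeta_C(p^m n)=\zeta_C(n)^{p^m}$ and the Euler-Carlitz relations $\zeta_C(n)=\frac{B_n}{\G_{n+1}}\tilde\pi^n$, the latter by~\eqref{formula 1}.

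Finally I would perform the count. From the above, $\mathrm{tr.deg}_{\bar{k}}\ E = \ell + \#\{\text{algebraically independent zeta-related generators}\}$, and by Theorem~\ref{Chang-Yu thm} (equivalently~\eqref{CYtrdeg}) the set $\{\tilde\pi\}\cup\{\zeta_C(n):n\in U(s)\}$ is algebraically independent of cardinality $1+\#U(s)$; since $\#U(s) = s - \lfloor s/p\rfloor - \lfloor s/(q-1)\rfloor + \lfloor s/p(q-1)\rfloor$ by inclusion-exclusion on the divisibility conditions $p\nmid n$ and $(q-1)\nmid n$, and the ``$+1$'' from $\tilde\pi$ combined with the $\ell$ gamma values — but $\tilde\pi = \tilde\pi_1$ is already algebraic over $\bar{k}\big((\frac{1}{1-q})!\big)$ when... actually no: for $\ell\geq 1$ one has $\tilde\pi_1\sim (-1)!$ which lies in the gamma field only when $\ell=1$, so in general the $\tilde\pi$ contributes independently — I get $\mathrm{tr.deg}_{\bar{k}}\ E = \big(s - \lfloor s/p\rfloor - \lfloor s/(q-1)\rfloor + \lfloor s/p(q-1)\rfloor + 1\big) + \ell - 1 + \cdots$; the precise reconciliation of the ``$+1$'' is exactly the subtle point. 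The main obstacle is this last accounting: one must verify that the $t$-motive $M_\ell$ already contains the Carlitz motive $C$ (indeed $\det\Psi_\ell(\th)\sim\Omega(\th)$, as used in the proof of Theorem~\ref{main thm}), so $\tilde\pi$ is \emph{not} an independent generator on top of the $\ell$ gamma values and the $\#U(s)$ zeta values — rather the ``$+1$'' of~\eqref{CYtrdeg} is absorbed into the ``$\ell$'' of Theorem~\ref{main thm}, yielding the stated total $s - \lfloor s/p\rfloor - \lfloor s/(q-1)\rfloor + \lfloor s/p(q-1)\rfloor + \ell$. Making that overlap precise — i.e.\ checking $\bar{k}(\Psi(\th))$ is generated by the $\#U(s)$ polylog values plus the $\ell$ gamma values and nothing more, with $\tilde\pi$ already inside — is where the real care is needed; everything else is the routine linear algebra of the Anderson-Thakur formula and the inclusion-exclusion count.
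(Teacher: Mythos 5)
Your overall strategy is the one the paper intends: deduce the result from Theorem~\ref{main thm} using the Anderson--Thakur change of basis in each $N_n$, then account for the extra generators of $E$ via the Euler--Carlitz relations, the Frobenius $p$-power relations, and \eqref{formula 1}. You also eventually land on the key point that $\tilde\pi$ is already algebraic over the arithmetic gamma field, because $\det\Psi_\ell(\th)\sim\Omega(\th)\sim\tilde\pi^{-1}$, so the ``$+1$'' of $\tilde\pi$ is absorbed into the contribution $\ell$.

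There is, however, a genuine error in the middle of your argument. You assert that $E$ has the same transcendence degree over $\bar{k}$ as $\bar{k}(\Psi(\th))$. This is false whenever some $m_n\geq 1$: Theorem~\ref{main thm} gives $\mathrm{tr.deg}_{\bar{k}}\,\bar{k}(\Psi(\th)) = \ell + \sum_{n\in U(s)}(m_n+1)$, whereas the theorem you are proving claims $\mathrm{tr.deg}_{\bar{k}}\,E = \ell + \#U(s)$, and these differ by $\sum_{n\in U(s)} m_n$. The reason is that $\bar{k}(\Psi(\th))$ contains the individual polylogarithm values $\mathcal{L}_{n1}(\th),\ldots,\mathcal{L}_{nm_n}(\th)$, while $E$ sees only the single $k$-linear combination $\zeta_C(n)$ of them. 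In your own final paragraph you compute $\#U(s)+\ell$, which contradicts this intermediate claim, but you attribute the tension solely to the ``$+1$'' from $\tilde\pi$ and never resolve the $\sum m_n$ discrepancy.

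The correct argument restricts rather than equates. Theorem~\ref{main thm} says
\[
\bigcup_{n\in U(s)}\{\mathcal{L}_{n0}(\th),\ldots,\mathcal{L}_{nm_n}(\th)\}\ \cup\ \bigl\{(\tfrac{q^j}{1-q^\ell})!\bigr\}_{j=0}^{\ell-1}
\]
is algebraically independent over $\bar{k}$. Since $\tilde\pi$ is algebraic over the gamma subfield, and since $\{\tilde\pi^n,\mathcal{L}_{n0}(\th),\ldots,\mathcal{L}_{nm_n}(\th)\}$ and $\{\tilde\pi^n,\zeta_C(n),\mathcal{L}_{n1}(\th),\ldots,\mathcal{L}_{nm_n}(\th)\}$ are both $k$-bases of $N_n$, replacing each $\mathcal{L}_{n0}(\th)$ by $\zeta_C(n)$ produces another algebraically independent set of the same cardinality. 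Now \emph{discard} the $\mathcal{L}_{nj}(\th)$ with $j\geq 1$: the subset $\{\zeta_C(n):n\in U(s)\}\cup\{(\tfrac{q^j}{1-q^\ell})!\}_{j=0}^{\ell-1}$ remains algebraically independent and has cardinality $\#U(s)+\ell$. Finally, $E$ is algebraic over the field this subset generates, by the reductions you correctly list. This gives the stated count without ever claiming $E$ and $\bar{k}(\Psi(\th))$ have equal transcendence degree.
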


\begin{remark}
In the classical case, a conjecture about the Riemann zeta function
at positive integers asserts that the Euler relations, i.e.,
$\zeta(2n)/ (2 \pi \sqrt{-1})^{2n}\in \mathbb{Q}$ for $n\in
\mathbb{N}$, account for all the $\overline{\mathbb{Q}}$-algebraic
relations among the special zeta values $\left\{
\zeta(2),\zeta(3),\zeta(4),\ldots \right\}$. For special
$\G$-values, i.e., values of the Euler $\G$-function at proper
fractions, there are the `natural' $\overline{\mathbb{Q}}$-algebraic
relations among them coming from the translation, reflection and
Gauss multiplication identities satisfied by the $\G$-function,
which are referred to as the standard relations. The Rohrlich-Lang
conjecture predicts that these relations account for all
$\overline{\mathbb{Q}}$-algebraic relations among special
$\G$-values.  One is lead to the conjectures that  the Euler
relations among special zeta values and the standard algebraic
relations among special $\G$-values  account for all the
$\overline{\mathbb{Q}}$-algebraic relations among the special zeta
values and special $\G$-values put together.
\end{remark}

\begin{remark}
Our Corollary \ref{standard} asserts that the standard relations
among the arithmetic gamma values account for all
$\bar{k}$-algebraic relations among the arithmetic gamma values.
Theorem \ref{main result} asserts that the Euler-Carlitz relations,
the Frobenius $p$-th power relations and the standard relations
among the arithmetic gamma values account for all
$\bar{k}$-algebraic relations among the arithmetic gamma values and
Carlitz zeta values put together.
\end{remark}

\end{document}